\newtheorem{theorem}{Theorem}[section]
\newtheorem{lemma}[theorem]{Lemma}
\theoremstyle{definition}
\theoremstyle{remark}
 \newtheorem{corollary}{Corollary}[theorem]
\numberwithin{equation}{section}
\DeclareFontFamily{OT1}{rsfs}{}
\DeclareFontShape{OT1}{rsfs}{n}{it}{<-> rsfs10}{}
\DeclareMathAlphabet{\mathscr}{OT1}{rsfs}{n}{it}
\DeclarePairedDelimiter\floor{\lfloor}{\rfloor}
 \DeclareMathOperator{\Gal}{Gal}
\DeclarePairedDelimiter\ceil{\lceil}{\rceil}
\newcommand{\Lo}{{\mathscr{L}}}
\renewcommand{\Re}{{\mathfrak{Re}}}
\renewcommand{\Im}{{\mathfrak{Im}}}
\begin{document}

\title{The least prime ideal in the Chebotarev Density Theorem}

\author[H. Kadiri]{Habiba Kadiri}
\address{Department of Mathematics and Computer Science\\
University of Lethbridge\\
4401 University Drive\\
Lethbridge, Alberta\\
T1K 3M4 Canada}
\email{habiba.kadiri@uleth.ca}

\author[N. Ng]{Nathan Ng}
\address{}
\email{nathan.ng@uleth.ca}

\author[P.J. Wong]{Peng-Jie Wong}
\address{}
\email{pengjie.wong@uleth.ca}

\thanks{This research was partially supported by the NSERC Discovery grants of H.K. and N.N.
P.J.W. was supported by a PIMS postdoctoral fellowship and the University of Lethbridge.}

\begin{abstract}
In this article, we prove a new bound for the least prime ideal in the Chebotarev density theorem, which improves the main theorem of Zaman \cite{Z} by a factor of $5/2$. Our main improvement comes from a new version of Tur\'an's power sum method.
The key new idea is to use Harnack's inequality for harmonic functions to derive a superior lower bound for the generalised Fej\'er kernel.
\end{abstract}

\subjclass[2010]{Primary 11R44, 
; Secondary 11R42, 11M41,11Y35}

\keywords{Chebotarev density theorem, the least prime}


\maketitle

\section{Introduction}

Let $L/K$ be a Galois extension of number fields with Galois group $\mathcal{G}$ and 
let  $\mathcal{C} \subset \mathcal{G}$ be a conjugacy class. Attached to each unramified prime ideal $\mathfrak{p}$ in $\mathcal{O}_K$,
the ring of integers of $K$,  is the Artin symbol $\sigma_{\mathfrak{p}}$, a conjugacy class in $\mathcal{G}$. The famous Chebotarev density theorem asserts that  
$$\# \{ \mathfrak{p} \subset \mathcal{O}_K \ | \ N \mathfrak{p} \le x, \ \sigma_{\mathfrak{p}}=\mathcal{C}  \} \sim  
  \frac{|\mathcal{C}|}{|\mathcal{G}|} \text{Li}(x)
$$
as $x \to \infty$, where $ \text{Li}(x)$ is the usual logarithmic integral and
 $N=N_{\mathbb{Q}}^{K}$ is the absolute norm of $K$.
 This is a vast generalisation of both the prime number theorem and Dirichlet's theorem on primes in arithmetic progressions. 

In light of Linnik's celebrated result on the least prime in an arithmetic progression, one may ask for a bound for the least prime ideal whose Artin symbol equals $\mathcal{C}$. Assuming the Generalized Riemann Hypothesis for the Dedekind zeta function of $L$, Lagarias and Odlyzko 
  \cite{LO} showed that  $N \mathfrak{p} \ll (\log d_L)^2$,
 where  $d_L =|\text{disc}(L/\mathbb{Q})|$ is the absolute discriminant of $L$ (see also Bach and Sorenson \cite{BS}). 
 The first unconditional result is due to  
Lagarias, Montgomery, and Odlyzko \cite{LMO}, who proved there exists a positive constant $B$ such that there 
is a prime ideal $\mathfrak{p}$ with $\sigma_{\mathfrak{p}}=\mathcal{C}$ and
$N \mathfrak{p} \le d_{L}^{B}$ for $L \ne \mathbb{Q}$. Recently, Ahn and Kwon \cite{AK} established $B=12577$ 
is valid and 
Zaman \cite{Z} established that $B=40$ is valid when $d_L$ is sufficiently large.

The main result of this article is the following theorem. 
\begin{theorem}  \label{mainthm}
Let $L/K$ be a Galois extension of number fields with $\mathcal{G}=\Gal(L/K)$ and 
let  $\mathcal{C} \subset \mathcal{G}$ be a conjugacy class.  There exists an unramified prime ideal $\mathfrak{p}$ of $\mathcal{O}_K$, of degree one, 
such that $\sigma_{\mathfrak{p}}=\mathcal{C}$ and $N \mathfrak{p} \le d_{L}^{16}$
when $d_L$ is sufficiently large.
\end{theorem}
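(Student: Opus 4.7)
The plan is to follow the architecture of Lagarias--Montgomery--Odlyzko and Zaman, while sharpening the crucial step where Tur\'an's power sum method is invoked. First, using orthogonality of characters of $\mathcal{G}$ and an Aramata--Brauer-type factorisation of $\zeta_L$ into Hecke $L$-functions, one reduces the counting problem to the positivity of a smoothed sum
\[
S_\mathcal{C}(x) = \sum_{N\mathfrak{p}^m \le x,\ \sigma_\mathfrak{p}^m = \mathcal{C}} \Lambda(N\mathfrak{p}^m)\, \phi(N\mathfrak{p}^m/x),
\]
for a carefully designed test function $\phi$ (for instance a smoothed indicator obtained by convolving a box with a bump, chosen so that the contribution of higher-degree prime ideals can be absorbed). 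An explicit formula for $S_\mathcal{C}(x)$ produces a main term of size $\asymp \tfrac{|\mathcal{C}|}{|\mathcal{G}|}x$ and an error expressed as a sum $\sum_\rho \tilde\phi(\rho)\, x^\rho/\rho$ over the non-trivial zeros $\rho$ of $\zeta_L$. The theorem follows as soon as one shows $S_\mathcal{C}(x) > 0$ for some $x \le d_L^{16}$, with the dominant contribution coming from a degree-one prime.

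The decisive input is the distribution of zeros of $\zeta_L$ near $s=1$. By the standard Lagarias--Odlyzko zero-free region, every non-trivial zero is repelled from the line $\mathrm{Re}(s)=1$, with the possible exception of one real Siegel-type zero $\beta_1$. In the absence of such $\beta_1$, known zero-density bounds together with the zero-free region make the zero sum negligible. If $\beta_1$ does exist, I would isolate its contribution (whose sign is favourable and therefore \emph{helps} rather than hurts) and invoke the Deuring--Heilbronn repulsion phenomenon, which pushes every other non-trivial zero away from $\mathrm{Re}(s)=1$ by an amount controlled explicitly by $1-\beta_1$.

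The hard part, and the source of the improvement from Zaman's exponent $40$ to $16$, is making Deuring--Heilbronn quantitatively sharp. This is achieved via Tur\'an's power sum method applied to a weighted power sum of the form $\sum_\rho w^{\rho}$ over the zeros of $\zeta_L$ lying in a box near $s=1$, with $w$ chosen near $1$. The efficiency of the method is dictated entirely by the size of a lower bound for a generalised Fej\'er kernel $K_N(\theta) = \sum_n c_n \cos(n\theta)$ on a suitable arc. My plan here is to implement the new idea announced in the abstract: view a real-valued companion to this kernel as a \emph{positive} harmonic function $u$ on a disk, and apply Harnack's inequality
\[
\frac{R-r}{R+r}\, u(z_0) \le u(z) \le \frac{R+r}{R-r}\, u(z_0) \qquad (|z - z_0| = r < R),
\]
to transfer the value of the kernel at one convenient point, where it is large and exactly computable, to the whole arc. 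This replaces the cruder convexity-based kernel lower bound used previously by a bound that is essentially tight, yielding the promised factor-$5/2$ gain.

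With the sharpened Fej\'er-kernel estimate in hand, the Deuring--Heilbronn repulsion constant is tightened, the admissible range for $x$ widens, and it remains to carry out a joint optimisation over the test function $\phi$, the degree of the Tur\'an polynomial, and the Harnack radius ratio $r/R$, assuming $d_L$ is large enough that the various lower-order terms can be absorbed into the main terms. The chief obstacle I expect is precisely this simultaneous optimisation, since these parameters are coupled: loosening the Harnack ratio improves the kernel lower bound but forces the Tur\'an polynomial to live on a thinner arc, while adjusting $\phi$ shifts the balance between the main term and the zero sum. Provided the Harnack-based kernel estimate has the expected strength, the remainder of the argument is a careful but essentially mechanical refinement of Zaman's, delivering the final exponent $16$.
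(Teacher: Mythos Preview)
Your high-level architecture matches the paper's: smoothed prime sum via Deuring's reduction to Hecke $L$-functions, explicit formula, and a sharpened Deuring--Heilbronn phenomenon obtained by feeding a Harnack-improved lower bound for the generalised Fej\'er kernel $P(r,\theta)$ into Tur\'an's power sum method. You have correctly identified the key new idea.

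However, one point is backwards and would derail the argument if taken literally. You write that the exceptional zero $\beta_1$ has a contribution ``whose sign is favourable and therefore helps rather than hurts.'' In the framework actually used (see Lemma~\ref{lemmaTj}), zeros enter through $-\sum_\rho |F((1-\rho)\Lo)|$; since $\beta_1$ is extremely close to $1$, its term $|F((1-\beta_1)\Lo)|$ is essentially $1$ and nearly annihilates the main term $F(0)=1$. What remains is only of size $\asymp (B-2A\ell)\lambda_1$ with $\lambda_1=(1-\beta_1)\Lo$, and the entire difficulty in the exceptional regime is to force every other error to be $o(\lambda_1)$. This is why the Deuring--Heilbronn constant governs the final exponent, not because $\beta_1$ itself is helping. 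If you proceed believing the sign is favourable you will not set up the correct hierarchy of error bounds.

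Two further points where your sketch is too loose to carry the argument. First, the Harnack ratio $r/R$ is not a free optimisation parameter: in Lemma~\ref{fejer}(iii) one has $R=1$ and $r=|z_n|/|z_1|$ is dictated by the zeros, so the Harnack step simply yields the fixed bound $P(r,\theta)\ge -r/(1+r)$. The genuine free parameters are $(c,\eta,T,\varepsilon)$ in Theorem~\ref{DH_T} and the weight parameters $(\ell,A,B)$. Second, the paper relies on a four-way case split on the size of $\lambda_1$; the non-exceptional and ``$\lambda_1$ small'' cases are quoted from Zaman (constants $7.41$ and $2.63$), and only the ``very small'' ($\Lo^{-200}\le\lambda_1<\tilde\eta$) and ``extremely small'' ($\lambda_1<\Lo^{-200}$) cases use the new Harnack input. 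The dominant exponent $B\approx 15.72$ comes from the extremely-small case, where one must introduce a \emph{sequence} of heights $T_1<\cdots<T_J$ with repulsion constants $C_j=C(c_j,\tfrac12,T_j,\varepsilon)$ and verify the coupled constraints \eqref{firstassumptions}--\eqref{conditionsBT}. Your proposal should make this case structure and the multi-scale choice of $T_j$ explicit; a single height $T^*$ will not close the argument.
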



Our main improvement is a stronger zero-repulsion theorem than in \cite{Z}. We now state our result. Let $\zeta_L(s)$ denote the Dedekind zeta function of $L$. It was shown in \cite{K} that for $d_L$ sufficiently large, 
$\zeta_L(s)$ has no zeros in the region 
\begin{equation}
  \label{zfr}
  \Re(s) \ge 1 - \frac{1}{12.74 \log d_L}  \text{ and } 
  |\Im(s)| \le 1
\end{equation}
with the exception of  at most one real zero $\beta_1$.  The
Deuring-Heilbronn phenomenon roughly asserts that if an {\it exceptional zero} $\beta_1$ exists then 
the zero-free region for $\zeta_L(s)$ can be enlarged. Indeed, we have the following explicit version of this phenomenon.
\begin{theorem}  \label{dh}
Assume $\zeta_L(s)$ admits an exceptional real zero $\beta_1$.   If 
$\beta' +i \gamma'$ is another zero of $\zeta_L(s)$ satisfying $\beta' \ge \frac{1}{2}$ and $|\gamma'| \le 1$, then
for $d_L$ sufficiently large, 
\begin{equation}
     \beta' \le  1 - \frac{ \log \Big( \frac{\kappa}{(1-\beta_1) \log d_L} \Big)}{14.144 \log d_L}
\end{equation}
for some absolute $\kappa >0$.  
\end{theorem}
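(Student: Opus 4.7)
The plan is to follow the Tur\'an power sum strategy for the Deuring--Heilbronn phenomenon, inserting the improved kernel estimate signalled in the abstract in place of the one used in \cite{Z}. Assume $\beta_1$ is exceptional and that $\rho' = \beta' + i\gamma'$ is another non-trivial zero of $\zeta_L$ with $\beta' \ge 1/2$ and $|\gamma'|\le 1$. First I would apply the explicit formula for $\zeta_L(s)$ to a test pair $(f,\widehat f)$ of the form $f(y)=(1-|y|/X)_+$ or a similarly truncated kernel with parameter $X=X(d_L)$ to be chosen, obtaining an identity of the shape
\[
\sum_{n}\Lambda_L(n)f(\log n) \;=\; X - \sum_\rho \widehat f(\rho) + (\text{archimedean and gamma-factor terms}).
\]
Rearranging isolates the zero-sum $\sum_\rho \widehat f(\rho)$ with a controlled error of size $O(\log d_L)$, and singles out the contributions of $\beta_1$ and of $\rho'$ as the dominant terms to be played against one another.

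The heart of the argument is a Tur\'an-type lower bound for a power sum built from the zeros. Set $z_j=e^{-(1-\rho_j)\eta}$ for a small parameter $\eta>0$, so that $|z_j|$ is close to $1$ precisely when $\rho_j$ is close to the line $\Re s=1$. Form a generalised Fej\'er kernel $K_N(\theta)=|P_N(e^{i\theta})|^2$ whose Fourier coefficients will serve as the weights $c_k$ in a power sum $\sum_j c_k z_j^k$. The key new step, replacing the kernel bound used in \cite{Z}, is to view $\log|P_N|$ as the boundary values of a subharmonic function on the unit disc, introduce the associated positive harmonic function, and apply Harnack's inequality on a suitable subarc. This produces a stronger pointwise lower bound for $K_N$ away from its zeros than the $L^2$ or Parseval-type estimates give, and the improvement translates directly into a larger effective repulsion constant.

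Combining the two ingredients and optimising the parameters $N$, $\eta$, the disc radius, and the support of $f$ should yield
\[
\beta' \le 1 - \frac{\log\bigl(\kappa/((1-\beta_1)\log d_L)\bigr)}{14.144 \log d_L},
\]
with the constant $14.144$ emerging as (essentially) the reciprocal of the best Harnack-enhanced Tur\'an constant, and $\kappa$ absorbing archimedean and lower-order terms. The main obstacle, and the reason this will be technical rather than conceptually hard, is the precise numerical bookkeeping through (i) the explicit formula with its $\Gamma$-factor and conductor contributions, (ii) the zero-free region \eqref{zfr}, and (iii) the Harnack--Tur\'an lower bound. A secondary delicate point is choosing the arc on which Harnack's inequality is applied so that it simultaneously contains the image of $\rho'$ and maximises the resulting minimum of $K_N$: too short an arc loses the Harnack gain, while too long an arc fails to exclude the zeros of $P_N$ and ruins the lower bound. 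Balancing these constraints is what should deliver precisely the constant $14.144$ claimed.
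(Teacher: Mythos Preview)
Your outline names the right ingredients (Tur\'an power sum, Fej\'er-type kernel, Harnack) but the concrete mechanism you describe does not match what actually produces the constant $14.144$, and in two places it is specifically off-track.

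First, the Harnack step. You propose to look at $\log|P_N|$ as a subharmonic function, pass to ``the associated positive harmonic function,'' and apply Harnack on a carefully chosen subarc to get a lower bound for $K_N$ away from its zeros. That is not how the improvement is obtained, and in that form it would not work cleanly: $\log|P_N|$ is subharmonic, Harnack requires a non-negative \emph{harmonic} function, and no arc-selection enters. The actual (and much simpler) observation is that $1+2P(r,\theta)$, where
\[
P(r,\theta)=\sum_{j=1}^{J}\Bigl(1-\tfrac{j}{J+1}\Bigr)r^j\cos(j\theta),
\]
is itself a non-negative harmonic function on the open unit disc (it is the real part of the Fej\'er kernel). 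Applying Harnack on the full disc at the origin gives immediately $1+2P(r,\theta)\ge\tfrac{1-r}{1+r}$, i.e.\ $P(r,\theta)\ge -\tfrac{r}{1+r}$. This single inequality, replacing the cruder $P(r,\theta)\ge -\min(\tfrac{3r}{2},\tfrac{1}{2})$ used in \cite{LMO,Z}, is the entire ``Harnack improvement''; it upgrades the constants in Tur\'an's theorem from $12$ and $48$ to $8$ and $32$, and that is what ultimately yields $14.144$ in place of $35.8$. There is no arc to choose and no balancing of arc length against zeros of $P_N$.

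Second, the explicit-formula and power-sum setup. The argument does not use a smooth test pair $(f,\widehat f)$ with $f(y)=(1-|y|/X)_+$, nor the variables $z_j=e^{-(1-\rho_j)\eta}$. Instead one differentiates $-\zeta_L'/\zeta_L(s)$ exactly $2j-1$ times and evaluates at $s=c$ and $s=c+i\gamma'$ for a real parameter $c\ge 2$, so that the zero contribution becomes $\sum_\rho (c-\rho)^{-2j}+\sum_\rho (c+i\gamma'-\rho)^{-2j}$. The Tur\'an method is applied to the normalised sequence $z_\rho=m^2/(c-\rho)^2$, $w_\rho=m^2/(c+i\gamma'-\rho)^2$ (with $m$ the smallest $|c-\rho|$), and the Harnack-improved Tur\'an inequality then gives $\Re(S_j)\ge\delta(c-\beta')^{-2j}$ for some $j\le (8+\varepsilon)\mathcal{A}a\mathscr{L}$. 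On the other side, the mean-value theorem bounds $(c-1)^{-2j}-(c-\beta_1)^{-2j}$ by $2j(1-\beta_1)/(c-1)^{2j+1}$, and combining the two inequalities and taking logarithms gives \eqref{lambdaprimeC}. The constant $14.144$ is then $C(c,\eta,T,\varepsilon)=\tfrac{(16+4\varepsilon)\mathcal{A}a}{c-1}$ evaluated at $c=5$, $\eta=\tfrac12$, $T=1$, $\varepsilon$ tiny. Your sketch does not have the differentiation step, the two evaluation points $c$ and $c+i\gamma'$, or the use of Zaman's explicit bound on $\mathscr{S}_L(\alpha,t)$ (which is where the $\Gamma$-factor bookkeeping and Odlyzko's discriminant bound enter); without these you cannot arrive at a specific numerical constant.
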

We remark that Zaman \cite{Z} established the above result with $35.8$ in place of our $14.144$, and that a superior Deuring-Heilbronn phenomenon was obtained by the first two authors for zeros much closer to $s=1$ (see \cite[Theorem 4]{KN}). 
The proof of Theorem \ref{dh} is a special case of the more general Theorem \ref{DH_T} below. 

Also, we have a variant of the above theorem for real zeros. 
\begin{theorem} \label{dhreal}
If $\zeta_L(s)$ has a real zero $\beta_1$ and another real zero 
$\beta'\in(0,1)$, then there is an absolute constant $\kappa >0$ such that for $d_L$ sufficiently large, 
\begin{equation}
     \beta' \le  1 - \frac{ \log \Big( \frac{\kappa}{(1-\beta_1) \log d_L} \Big)}{7.071998 \log d_L}.
\end{equation}
\end{theorem}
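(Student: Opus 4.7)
The plan is to adapt the Turán power-sum proof of Theorem \ref{dh} (the special case of the general Theorem \ref{DH_T}) to exploit the extra assumption that the secondary zero $\beta'$ is real. In the proof of Theorem \ref{dh} one must pair $\beta'+i\gamma'$ with its complex conjugate $\beta'-i\gamma'$ before extracting a real inequality from the zero sum, and this symmetrisation effectively doubles the weight assigned to the zero under study. When $\beta'$ is already real, $\beta'=\overline{\beta'}$ and the pairing collapses to a single term. This is precisely what converts the constant $14.144$ of Theorem \ref{dh} into $14.144/2 = 7.071998\approx 5\sqrt{2}$.

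Concretely, I would start from the same weighted explicit-formula identity for $-\zeta_L'/\zeta_L(s)$ used in Theorem \ref{dh}, paired against a Fejér-type kernel $K$ concentrated near $s=1$. Isolating the contributions from $\beta_1$ and $\beta'$ and applying the new Harnack-based lower bound on the generalised Fejér kernel (the main innovation of the paper) to the remaining zeros $\rho,\bar\rho$ yields a Turán-type inequality of the shape
\[
K(\beta') \;\le\; C_1\,K(\beta_1) \;-\; c_2\,\log d_L \;+\; o(\log d_L)
\]
with explicit constants $C_1,c_2$, where in the complex-target case the analogous inequality would instead carry $K(\beta'+i\gamma')+K(\beta'-i\gamma')$ on the left and hence a factor of $2$. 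Retaining the same parameter choices that optimise the constant in Theorem \ref{dh} but with the left-hand side halved gives the desired factor-of-two gain.

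Solving the inequality for $\beta'$ then produces
\[
\beta' \le 1 - \frac{\log\!\bigl(\kappa/((1-\beta_1)\log d_L)\bigr)}{7.071998\,\log d_L}
\]
for $d_L$ sufficiently large, as claimed. The main obstacle I anticipate is a careful verification that the halving actually propagates through every stage of the optimisation: one needs to check that the archimedean factors of $\zeta_L$, the lower bound coming from the other zeros in conjugate pairs, and any auxiliary averaging steps do not secretly reintroduce a conjugate-pair symmetrisation on the left, so that the constant $5\sqrt{2}$ is truly attained rather than being lost in the final parameter balance.
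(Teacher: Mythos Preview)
Your conclusion that the real-zero hypothesis buys exactly a factor of two is correct, and the paper does derive Theorem \ref{dhreal} from a real-zero analogue (Theorem \ref{DH_0}) of the general Theorem \ref{DH_T}, with $C'(5,\tfrac12,10^{-11})=7.07199\ldots \approx \tfrac12 C(5,\tfrac12,1,10^{-11})$. But the mechanism you describe is not the one that actually produces the saving, and the inequality you display does not have the shape of the Tur\'an argument used here.

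In the paper's proof of Theorem \ref{DH_T} the differentiated explicit formula \eqref{diffexplicit0} is evaluated at \emph{two} points, $s=c$ and $s=c+i\gamma'$, and the results are added. The second evaluation is needed so that the target zero $\rho'=\beta'+i\gamma'$ contributes a term of size $(c-\beta')^{-2j}$; it is not a conjugate-pair symmetrisation of $\rho'$ with $\bar{\rho'}$. The cost of carrying two evaluations is that the Tur\'an quantity $M=\sum \frac{|z_n|}{1+|z_n|}$ picks up two full zero sums, giving $M\le \mathcal{A}\,\mathscr{S}_L(d-1,t)$ and hence $J\le (8+2\varepsilon)\mathcal{A}a\Lo$. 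When $\gamma'=0$, a single evaluation at $s=c$ already sees $\beta'$ with weight $(c-\beta')^{-2j}$, so only $\tilde S_j=\sum_{\rho\neq\beta_1}(c-\rho)^{-2j}$ is needed; one then gets $2M\le \mathcal{A}\,\mathscr{S}_L(d-1,0)$, i.e.\ $M$ is halved, $J$ is halved to $\le 4\mathcal{A}a'(1+2\varepsilon)\Lo$ (Lemma \ref{turanreplacement_t=0}), and the final constant becomes $C'=\tfrac{8\mathcal{A}a'(1+4\varepsilon)}{c-1}$ in place of $C=\tfrac{(16+4\varepsilon)\mathcal{A}a}{c-1}$.

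So the halving does not occur on ``the left-hand side'' of an inequality of the form $K(\beta')\le C_1K(\beta_1)-c_2\log d_L$; rather it occurs in the \emph{upper bound for the range of $j$} in the power-sum lemma, which then feeds into the exponent when one solves $\tfrac{4j(1-\beta_1)}{(c-1)^{2j+1}}\ge \delta(c-\beta')^{-2j}$ for $\beta'$. If you try to write the argument as you sketched, you will not find the place where dropping a conjugate copy of $\beta'$ halves anything: all non-real zeros of $\zeta_L$ already come in conjugate pairs in both the real and complex cases, and their contribution to $M$ is unchanged. The genuine source of the gain is the collapse of the two evaluation points $c$ and $c+i\gamma'$ into one.
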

The  proof of Theorem \ref{dhreal} is a special case of the more general Theorem \ref{DH_0} below. 
This gives an upper bound for the exceptional zero $\beta_1$ of $\zeta_L(s)$. 
\begin{corollary}  \label{upperbd}
For $d_L$ sufficiently large, if $\zeta_L(s)$ has a real zero $\beta_1$,  then
\[
   1-\beta_1 \gg d_{L}^{-7.072},
\]
where the implied constant is absolute and effectively computable. 
\end{corollary}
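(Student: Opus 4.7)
The plan is to apply Theorem \ref{dhreal} to the pair of real zeros $\beta_1$ and $1-\beta_1$ of $\zeta_L(s)$, where the second zero is produced from the first via the functional equation.

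First I would observe that the completed Dedekind zeta function $\xi_L(s)$ satisfies $\xi_L(s)=\xi_L(1-s)$, and its archimedean Gamma factors are holomorphic and non-vanishing on the open interval $(0,1)$. Consequently, a real zero $\beta_1\in(0,1)$ of $\zeta_L(s)$ forces a paired real zero of $\zeta_L(s)$ at $1-\beta_1\in(0,1)$. The degenerate case $\beta_1=1/2$ gives $1-\beta_1=1/2\gg d_L^{-7.072}$ for free, so I may assume $\beta_1\ne 1/2$, and (relabelling if necessary) $\beta_1>1/2$. Then $\beta':=1-\beta_1\in(0,1/2)$ is genuinely distinct from $\beta_1$, which legitimises feeding the pair $(\beta_1,\beta')$ into Theorem \ref{dhreal}.

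Doing so, the conclusion of Theorem \ref{dhreal} reads
\[
 1-\beta_1 \;\le\; 1 - \frac{\log\!\bigl(\kappa/((1-\beta_1)\log d_L)\bigr)}{7.071998\,\log d_L},
\]
which rearranges at once to $7.071998\,\beta_1\,\log d_L \ge \log\bigl(\kappa/((1-\beta_1)\log d_L)\bigr)$. Exponentiating and using $\beta_1\le 1$ produces $(1-\beta_1)\log d_L \ge \kappa\,d_L^{-7.071998}$. Since $\log d_L \le d_L^{0.000002}$ once $d_L$ is large enough, this absorbs the logarithm and yields $1-\beta_1\gg d_L^{-7.072}$, with an absolute, effectively computable implied constant inherited from $\kappa$.

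I do not foresee any genuine obstacle: the bulk of the work has already been done in Theorem \ref{dhreal}, and Corollary \ref{upperbd} reduces to a single algebraic rearrangement of that bound, preceded by a trivial application of the functional equation. The only point requiring care is to check that $\beta'$ is truly distinct from $\beta_1$ when applying Theorem \ref{dhreal}; this is handled by the trivial separation of the case $\beta_1=1/2$.
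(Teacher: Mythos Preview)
Your proposal is correct and follows exactly the approach indicated in the paper, which simply says that the corollary ``follows from applying Theorem \ref{dhreal} to the zero $\beta'=1-\beta_1$''; you have merely supplied the routine details (functional equation, the trivial case $\beta_1\le 1/2$, and the final algebraic rearrangement absorbing the $\log d_L$ into the exponent). One tiny remark: rather than ``relabelling'', it is cleaner to note that if $\beta_1\le 1/2$ then $1-\beta_1\ge 1/2$ and the bound is trivial, so one may assume $\beta_1>1/2$ directly.
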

Note that Theorem \ref{dhreal} and Corollary \ref{upperbd} improve results of Zaman, who obtained the above bounds with $16.6$ in place of our $7.071998$ and $7.072$.   The proof of Corollary \ref{upperbd} follows from applying Theorem \ref{dhreal}
to the zero $\beta'=1-\beta_1$.  

\noindent {\it Remarks}.   We actually obtain the bound $d_{L}^{B}$ with $B=15.72$ in Theorem \ref{mainthm} and this likely can be slightly improved.  It seems that limit of our method is roughly the constant $14.144$  which appears in the Deuring-Heilbronn 
type zero-free region as given in Theorem \ref{dh}.
More precisely, as can be seen in \eqref{firstassumptions}, the exponent 15.72  arises from the value $C_1=14.58$ in Theorem \ref{DH_T} (see equation \eqref{highzfr}). 
 In fact, our constant $B$ is chosen to satisfy the constraints \eqref{firstassumptions} and \eqref{conditionsBT} below. 
Finally, it should be mentioned that the work of Thorner and Zaman \cite{TZ} gives a significant improvement to Theorem \ref{mainthm} in the case that there is a large abelian subgroup of $G$ intersecting $C$.

\section{Deuring-Heilbronn via the Tur\'an Power Sum method}

\subsection{A new version of Tur\'an's power sum method} 
In this section, we shall prove a version of Tur\'an's power sum method in much the same spirit as Lagarias et al. \cite{LMO}, who made use of properties of functions of the form 
\[ 
  P(r,\theta) = \sum_{j=1}^{J} \Big( 1-\frac{j}{J+1} \Big) r^j \cos(j \theta),
\]
where $0 \le r \le 1$.  We have the following facts concerning $P(r,\theta)$. 
\begin{lemma} \label{fejer}
Let $\theta \in \mathbb{R}$. We have
\begin{itemize}
 \item[(i)] $P(r,\theta)  \ge - \frac{1}{2} \text{ for }  0 \le r \le 1$,
 \item[(ii)]  $P(1,0)  = \frac{J}{2}$,
 \item[(iii)]  $P(r,\theta)  \ge -\frac{r}{1+r}   \text{ for }  0 \le r  \le 1$.
\end{itemize}
\end{lemma}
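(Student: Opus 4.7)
My plan is to treat the three items in the order (ii), (i), (iii), since (ii) is a one-line computation, (i) is the classical Fej\'er kernel lower bound, and (iii) is the new bound that the abstract identifies with Harnack's inequality. Part (ii) is immediate from
\[
P(1,0) = \sum_{j=1}^{J}\Bigl(1-\tfrac{j}{J+1}\Bigr) = J - \tfrac{1}{J+1}\cdot\tfrac{J(J+1)}{2} = \tfrac{J}{2}.
\]

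For part (i), I would regard $P(r,\theta) = \Re f(z)$ where $z = re^{\i\theta}$ and $f(z) = \sum_{j=1}^{J}(1 - j/(J+1))z^{j}$ is a polynomial, so that $P$ is harmonic in $z$ on the closed unit disk. On the boundary $|z|=1$ the classical Fej\'er identity
\[
1 + 2P(1,\theta) = \sum_{j=-J}^{J}\Bigl(1 - \frac{|j|}{J+1}\Bigr)e^{\i j\theta} = \frac{1}{J+1}\Bigl|\sum_{j=0}^{J} e^{\i j\theta}\Bigr|^{2} \ge 0
\]
yields $P(1,\theta) \ge -\tfrac{1}{2}$, and the minimum principle applied to the harmonic function $P + \tfrac{1}{2}$ propagates this bound inward to all $0 \le r \le 1$.

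For part (iii), the decisive observation is that $u(z) := \Re f(z) + \tfrac{1}{2}$ is a non-negative harmonic function on the closed unit disk (by part (i)) with $u(0) = \tfrac{1}{2}$, since $f(0)=0$. Harnack's inequality for non-negative harmonic functions on $|z|\le 1$ (equivalently, the bound $P_r(\psi) \ge \frac{1-r}{1+r}$ on the Poisson kernel, combined with the mean value property) therefore gives
\[
u(re^{\i\theta}) \ge \frac{1-r}{1+r}\,u(0) = \frac{1-r}{2(1+r)},
\]
and subtracting $\tfrac{1}{2}$ produces $P(r,\theta) \ge \frac{1-r}{2(1+r)} - \tfrac{1}{2} = -\frac{r}{1+r}$, which is exactly (iii).

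There is no real obstacle once the setup is in hand: the crucial point is that $P(r,\theta) + \tfrac{1}{2}$ is not merely non-negative but takes the specific value $\tfrac{1}{2}$ at the origin, so Harnack's inequality outputs precisely the improved lower bound $-r/(1+r)$ rather than the weaker uniform bound $-\tfrac{1}{2}$ coming from (i). This is the sharpening of the generalised Fej\'er kernel estimate advertised in the abstract, and it is what ultimately drives the improved constants in Theorems \ref{dh} and \ref{dhreal}.
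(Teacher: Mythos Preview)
Your proposal is correct and follows essentially the same route as the paper: both use the Fej\'er kernel identity plus the maximum/minimum principle for (i), a direct computation for (ii), and Harnack's inequality applied to the non-negative harmonic function $P+\tfrac{1}{2}$ (the paper uses the rescaling $1+2P$, which is the same thing) for (iii). The only cosmetic difference is that the paper explicitly separates out the boundary case $r=1$ in (iii), where the asserted bound $-r/(1+r)=-\tfrac{1}{2}$ reduces to (i); you might tighten your phrasing ``Harnack's inequality for non-negative harmonic functions on $|z|\le 1$'' to the open disk and note that $r=1$ is covered by (i).
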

\begin{proof}
(i) Note that $1+2 P(1,\theta)$ is Fej\'er's kernel and thus non-negative.   It follows that, $P(1,\theta) \ge -\frac{1}{2}$.  Since $P(r,\theta)$ is a harmonic function,
we have $P(r,\theta) \ge - \frac{1}{2}$ for $0 \le r < 1$. 
(ii) is a direct calculation. 
To prove (iii), we first note that the case $r=1$ follows from (i), so we may assume that $0 \le r < 1$. We then
observe 
that by writing $(x,y)=(r\cos\theta,r\sin\theta)$, $F(x,y)=1+2P(r,\theta)$ is the real part of the kernel
$
1+2\sum_{j=1}^{J}( 1-\tfrac{j}{J+1} )  r^j e^{ij\theta}$.
Thus, $F$ is a non-negative harmonic function defined on the unit ball, centred at $(0,0)$, of $\Bbb{R}^2$. Recall that Harnack's inequality (see e.g. \cite[Theorem 2.1]{Ka07}) asserts that for any non-negative harmonic function $H$ defined over an open ball $\|(x,y)\|_{\Bbb{R}^2}<R$, then for any $\|(x,y)\|_{\Bbb{R}^2}=r$ with $r<R$,
$$
\frac{R-r}{R+r}H(0,0)\le H(x,y).
$$
Applying this with $H=F$ and $R=1$, we have  
$
\tfrac{1-r}{1+r}F(0,0) \le F(x,y)
$
for any $\|(x,y)\|_{\Bbb{R}^2}= r<1.$ As $F(0,0)=1$, we then deduce that for any $r<1$,
$
\tfrac{1-r}{1+r}-1\le 2P(r,\theta)
$,
which together with a simplification completes the proof.
\end{proof}
\noindent {\it Remarks}. (i) The authors of \cite{LMO}  and \cite{Z} use  the lower bound 
$P(r,\theta) \ge -\min(\frac{3r}{2},\frac{1}{2})$ for $0 \le r \le 1$ to  obtain a version of Tur\'an's power sum method. 
The main improvements obtained in Theorems \ref{mainthm}, \ref{dh},  \ref{dhreal}, and \ref{newTuran} come from our use of  Lemma \ref{fejer} (iii). \\
(ii)  As the proof of Theorem \ref{newTuran} relies on Lemma \ref{fejer} (ii), one may try to replace $P(r,\theta)$ by a function of the form  $
  \widetilde{P}(r,\theta) = \sum_{j=1}^{J} a_j r^j \cos(j \theta)$ which satisfies the first condition of Lemma \ref{fejer} and $\widetilde{P}(1,0)>\frac{J}{2}$.
 This leads  to  the extremal problem of finding $\max \{a_1+\ldots +a_J \mid \sum_{j=1}^J a_j  \cos(j \theta)\ge -\frac{1}{2} \}$.
However, $P(1,\theta)$ is, in fact, \emph{the} solution of such a problem (see, for example, \cite[Section 4]{Di}). 

Now we are in a position to prove a new version of Tur\'an's power sum method. This  refines \cite[Theorem 4.2]{LMO}, and will help us to enlarge the zero-free region later.
\begin{theorem}\label{newTuran}
Let $\varepsilon>0$.
For any $j\in \Bbb{N}$, set $s_j=\sum_{n\ge 1}b_nz_n^j$, and assume that
\begin{itemize}
 \item[(i)]  $z_n \in \mathbb{C}$ and $|z_n|\le |z_1|$ for every $n\ge 1$ where $z_1 \ne 0$, 
 \item[(ii)] each $b_n$ is non-negative and $b_1 >0$.
\end{itemize}
Set
$
M=b_1^{-1}\sum_{n\ge 1}\frac{b_n |z_n|}{|z_1|+|z_n|}$.
Then there exists $j$ with $1\le j\le (8+\varepsilon)M$ such that 
\[
\Re(s_{j})
\ge  \frac{ b_1 \varepsilon }{32+4\varepsilon} |z_1|^{j}.
\]
\end{theorem}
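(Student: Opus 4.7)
The plan is to adapt the classical Tur\'an argument of \cite{LMO}, substituting the sharper kernel bound from Lemma \ref{fejer}(iii) for the cruder $P(r,\theta) \ge -\min(\tfrac{3r}{2}, \tfrac{1}{2})$ used there and in \cite{Z}. As a preliminary reduction, I would replace $z_n$ by $z_n e^{-i\arg(z_1)}$; this preserves both hypotheses and the value of $M$, so I may assume $z_1 = |z_1|$ is positive real (the conclusion on $\Re(s_j)$ being read in this rotated frame).

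Setting $J = \lfloor (8+\varepsilon)M \rfloor$, consider the weighted test sum
$$ T := \sum_{j=1}^{J} \Bigl(1 - \tfrac{j}{J+1}\Bigr) \frac{\Re(s_j)}{|z_1|^j}. $$
Writing $z_n = |z_n|e^{i\theta_n}$ and interchanging the order of summation rearranges this as $T = \sum_{n \ge 1} b_n P(\rho_n,\theta_n)$ with $\rho_n := |z_n|/|z_1| \in (0,1]$. The dominant $n=1$ term contributes $b_1 P(1,0) = b_1 J/2$ by Lemma \ref{fejer}(ii). For $n \ge 2$, Lemma \ref{fejer}(iii) gives $P(\rho_n,\theta_n) \ge -|z_n|/(|z_1|+|z_n|)$; recognising the resulting sum as the $n \ge 2$ portion of $b_1 M$ then yields
$$ T \ge \frac{b_1 J}{2} - b_1\Bigl(M - \tfrac{1}{2}\Bigr) = \frac{b_1(J+1)}{2} - b_1 M. $$

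To finish, argue by contradiction: if $\Re(s_j) < A|z_1|^j$ for every $1 \le j \le J$ with $A = \tfrac{b_1 \varepsilon}{32+4\varepsilon}$, then, since the kernel weights are non-negative with sum $J/2$, we get $T < AJ/2$. Comparing with the lower bound forces $A > b_1(J+1-2M)/J$, which fails for $J$ taken slightly above $(8+\varepsilon)M$, giving the required contradiction and hence the existence of the claimed $j$. The decisive new input, relative to \cite{LMO,Z}, is that $\rho/(1+\rho)$ matches the summand of $M$ term-for-term, so the $n \ge 2$ losses are absorbed cleanly into $M$ rather than producing a larger error involving $\sum b_n$. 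The main subtlety to track is the interplay between the rotation, the integer rounding of $J$, and the final constant chase that verifies the stated exponents $(8+\varepsilon)$ and $\varepsilon/(32+4\varepsilon)$; this amounts to a short arithmetic check rather than a new conceptual difficulty.
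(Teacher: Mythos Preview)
Your rotation step is not a valid reduction. Replacing $z_n$ by $z_n e^{-i\arg z_1}$ sends $s_j$ to $e^{-ij\theta_1}s_j$, so the real part you end up bounding is $\Re(e^{-ij\theta_1}s_j)$, not the original $\Re(s_j)$ that the theorem demands. You flag this yourself (``read in this rotated frame''), but the theorem is stated in the \emph{unrotated} frame, and there is no way to pass back: $\Re(e^{-ij\theta_1}s_j)\ge c$ does not imply $\Re(s_j)\ge c$. Without the rotation your test sum $T=\sum_n b_n P(\rho_n,\theta_n)$ gives for $n=1$ only $b_1 P(1,\theta_1)\ge -b_1/2$, so the argument collapses.

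This is precisely why the paper does \emph{not} rotate. Instead it uses the test sum
\[
\sum_{j=1}^J\Bigl(1-\tfrac{j}{J+1}\Bigr)\Re(s_j)\bigl(1+\cos(j\theta_1)\bigr),
\]
whose weights $(1-\tfrac{j}{J+1})(1+\cos(j\theta_1))$ are still non-negative (so the upper bound $\le J\max_j\Re(s_j)$ survives) and which, after interchanging sums, produces $\sum_n b_n\bigl(P(r_n,\theta_n)+\tfrac12 P(r_n,\theta_n-\theta_1)+\tfrac12 P(r_n,\theta_n+\theta_1)\bigr)$. Now the $n=1$ term contains $\tfrac12 P(1,0)=J/4$, while Lemma~\ref{fejer}(iii) bounds all remaining $P$-values by $-r_n/(1+r_n)$; summing gives $\ge b_1\bigl(\tfrac{J+1}{4}-2M\bigr)$, and the choice $J=\lfloor(8+\varepsilon)M\rfloor$ yields the stated constants. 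The factor $(1+\cos(j\theta_1))$ is the missing idea: it costs a factor of $2$ in the main term (hence $J/4$ rather than $J/2$, and $8$ rather than $2$ in the final exponent), but it is what makes the argument go through for arbitrary $\theta_1$.
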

\begin{proof}
Let $\varepsilon >0$.
By homogeneity, we may assume $|z_1|=1$. Write $z_{n} = r_{n} e^{i \theta_{n}}$ where $0 \le r_n \le 1$.
Note that $r_1=1$. 
Observe that we have the inequality
\begin{equation}
  \label{maxsj}
\sum_{j=1}^{J} \Big( 1 - \frac{j}{J+1} \Big) \Re(s_{j})(1+\cos(j \theta_1))\le J \Big( \max_{1 \le j \le J} \Re(s_j)  \Big).
\end{equation}
By definition, the sum on the left of \eqref{maxsj} equals
\begin{equation}\label{sumP}
   \sum_{n \ge 1}b_n \Big( P(r_{n },\theta_{n})
    + \frac{1}{2}  P(r_{n },\theta_{n }-\theta_1)
      + \frac{1}{2}  P(r_{n },\theta_{n }+\theta_1)
   \Big).
\end{equation}   
By  Lemma \ref{fejer} (ii) and (iii), 
a lower bound for the $n=1$ term is 
\begin{equation}
  \label{n1}
b_1 \Big( - \frac{r_1}{1+r_1} + \frac{J}{4} - \frac12 \frac{r_1}{1+r_1}\Big)
      =
b_1 \Big( \frac{J+1}4
      - 2 \frac{r_1}{1+r_1}\Big).
\end{equation}
For the  terms $n \ge 2$, Lemma \ref{fejer} (iii) bounds the remaining sum below by 
\begin{equation}
  \label{ngreater2}
  \sum_{n=2}^{\infty} -b_n \frac{r_n}{1+r_n} \Big( 1+ \frac{1}{2}+ \frac{1}{2} \Big) =
 -2  \sum_{n=2}^{\infty} \frac{b_n r_{n }}{1+r_{n }}.
\end{equation}
Combining \eqref{n1} and \eqref{ngreater2} it follows that  a lower bound for  \eqref{sumP} is 
\[
  b_1 \left(  \frac{J+1}{4} -2 M\right)
\ge  b_1 \frac{ \varepsilon M }{4} 
\text{ for } J= \floor*{(8+\varepsilon)M}.
\]
In particular, there exists $1\le j \le (8+\varepsilon)M$ such that 
$
\Re(s_{j})\ge \frac{b_1 \varepsilon M }{4J} 
\ge  
 \frac{b_1 \varepsilon }{4 (8+\varepsilon)}
$.
\end{proof}
\noindent {\it Remarks}. (i)
Here we only apply this with $b_n \equiv 1$, which was also the case for \cite{Z}.
In addition, thanks to Lemma \ref{fejer} we reduce to $8$ and respectively to $32$ the constants $12$ and $48$ appearing in \cite[Theorem 2.3]{Z}.  \\
(ii) This theorem can be used to improve the main theorem in \cite{AK} and some of the theorems in 
\cite{TZ}.  This is currently work in progress. 

\subsection{A quantitative version of Deuring-Heilbronn phenomenon} 

In this section, we will employ our version of Tur\'an's power sum method to prove a quantitative version of the Deuring-Heilbronn phenomenon.  
We require a result of Odlyzko on discriminants of number fields. (We shall note that the bound \eqref{dLbound} below is, in fact, a weak form of \cite[Theorem 1, Eq. (4)]{O}.) 
 \begin{theorem}[Odlyzko, \cite{O}]   \label{Odlyzko}
 Let $L$ be a number field of degree $n_L$ and discriminant $d_L$. Let $r_1=r_1(L)$ and $2r_2=2r_2(L) $ denote the number of real and complex embeddings of $L$, respectively. 
 For $d_L$ sufficiently large, 
 \begin{equation}
   \label{dLbound}
     (\log 60) \cdot r_1  + (\log 22) \cdot 2r_2 \le \log d_L. 
 \end{equation}
\end{theorem}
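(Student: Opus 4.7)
The plan is to apply Weil's explicit formula to the Dedekind zeta function $\zeta_L(s)$ and extract the claimed lower bound on $\log d_L$ by a positivity argument. In this framework, $\log d_L$ appears as a combination of archimedean integrals, a sum over prime ideals, and a sum over the nontrivial zeros of $\zeta_L$; the strategy, going back to Stark and developed systematically by Odlyzko, is to choose the test function so that both the prime sum and the zero sum carry the favourable sign and can simply be discarded.

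First I would write down the completed functional equation
\[
\Lambda_L(s) = d_L^{s/2}\bigl(\pi^{-s/2}\Gamma(s/2)\bigr)^{r_1}\bigl((2\pi)^{-s}\Gamma(s)\bigr)^{r_2}\zeta_L(s) = \Lambda_L(1-s),
\]
take its logarithmic derivative, and combine with the Hadamard factorisation of $\zeta_L$ to derive, for a test pair $(F,\widehat{F})$ of suitable decay, an identity of the form
\[
\log d_L \cdot \widehat{F}(0) = r_1\, I_{\R}(F) + 2r_2\, I_{\C}(F) + \Sigma_{\text{primes}}(F) + \Sigma_{\text{zeros}}(F) - R(F).
\]
Here $I_{\R}(F)$ and $I_{\C}(F)$ are the explicit digamma-type integrals attached to the real and complex infinite places, $\Sigma_{\text{primes}}(F)$ is a convergent sum over prime powers weighted by $F$, and $R(F)$ collects the polar contributions at $s=0,1$.

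Next I would restrict to $F = g \ast \widetilde{g} \ge 0$ for an auxiliary real-valued $g$, which simultaneously forces $\Sigma_{\text{primes}}(F) \ge 0$ and makes $\widehat{F}$ nonnegative on the critical line so that $\Sigma_{\text{zeros}}(F) \ge 0$ as well. Dropping both sums yields a bound
\[
\log d_L \ge r_1 \cdot \frac{I_{\R}(F)}{\widehat{F}(0)} + 2r_2 \cdot \frac{I_{\C}(F)}{\widehat{F}(0)} - o(1),
\]
where the $o(1)$ absorbs $R(F)/\widehat{F}(0)$ and becomes negligible as $n_L \to \infty$; since large $d_L$ forces large $n_L$ via Minkowski's bound, this is exactly what the hypothesis ``$d_L$ sufficiently large'' takes care of. The crux is then the variational problem of simultaneously maximising the pair $\bigl(I_{\R}(F)/\widehat{F}(0),\, I_{\C}(F)/\widehat{F}(0)\bigr)$ over admissible $F$; this is precisely Odlyzko's extremal problem, whose optimum --- typically located by solving a dual semi-infinite linear program --- delivers the constants $\log 60$ and $\log 22$ appearing in \eqref{dLbound}. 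I expect this delicate simultaneous optimisation to be the main obstacle, since obtaining both constants at once (rather than each in isolation) rules out simpler Gaussian or indicator-type test functions and forces a more sophisticated construction.
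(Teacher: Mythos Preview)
The paper does not prove this statement at all; it is quoted verbatim from Odlyzko's work \cite{O} and used as a black box, with the parenthetical remark that \eqref{dLbound} is a weak form of \cite[Theorem 1, Eq.~(4)]{O}. So there is no ``paper's own proof'' to compare against. Your outline of the explicit-formula/positivity machinery is indeed the method Odlyzko himself uses, and the general architecture (completed $\Lambda_L$, logarithmic derivative, nonnegative test function $F = g * \widetilde{g}$, discard prime and zero sums) is correct.

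There is, however, a genuine error in your handling of the error term. You write that ``large $d_L$ forces large $n_L$ via Minkowski's bound,'' but Minkowski's bound goes the other way: it gives $d_L \ge c^{n_L}$ for some $c>1$, i.e.\ an \emph{upper} bound $n_L \ll \log d_L$, not a lower bound. Quadratic fields already show that $d_L$ can be arbitrarily large with $n_L = 2$. The $o(1)$ in your inequality is $o(1)$ as $n_L \to \infty$, and this does \emph{not} follow from $d_L \to \infty$.

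The fix is a two-case split rather than an implication. Odlyzko's argument gives $\log d_L \ge r_1 \log 60 + 2r_2 \log 22$ once $n_L \ge N_0$ for some absolute $N_0$ (this is how his Theorem~1 is actually stated). For the remaining fields with $n_L < N_0$, the left side of \eqref{dLbound} is at most $N_0 \log 60$, so the inequality holds trivially once $d_L > 60^{N_0}$. Combining the two cases yields the statement for all $L$ with $d_L$ exceeding an absolute threshold. Your proposal would be complete with this correction in place of the mistaken appeal to Minkowski.
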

We shall also borrow a lemma from Zaman, which bounds certain sums of zeros of Dedekind zeta functions. For $\alpha \ge 1$ and $t \in \mathbb{R}$,
we define
\begin{equation}\label{def-mathscr{S}}
   \mathscr{S}_L(\alpha,t)
   = \sum_{\rho}  \Big( \frac{1}{|\alpha+1-\rho|^2}
   + \frac{1}{|\alpha+1+it-\rho|^2 } \Big),
\end{equation}
where $\rho$ ranges through non-trivial zeros of $\zeta_L(s)$. 
We further define the functions
\begin{align*}
 & G_1(\alpha;t) := \frac{\Delta(\alpha+1,0)+\Delta(\alpha+1,t)}{2}-\log \pi,  \\
 & G_2(\alpha;t)  := \frac{\Delta(\alpha+1,0)+\Delta(\alpha+2,0)+\Delta(\alpha+1,t)+\Delta(\alpha+2,t) }{4}-\log \pi,  
\\
 & \text{and }\Delta(x,y)  := \Re \Big( \frac{\Gamma'}{\Gamma} \Big(\frac{x+iy}{2} \Big)  \Big).
\end{align*}
In \cite[Lemma 2.5 and Eq. (2.10)]{Z}, Zaman showed the following estimate.
\begin{lemma} \label{lemma-explicit-ineq-Zaman} Let $L$ be a number field with $r_1$ and $2r_2$,
the number of real and complex embeddings, respectively.    Let $\Lo = \log d_L$. 
For $\alpha \ge 1$ and $t \in \mathbb{R}$, we have
\begin{equation}  \label{Zamanexplicit}
    \mathscr{S}_L(\alpha,t)
  \le \frac{1}{\alpha} \Lo +  \frac{G_1(\alpha;|t|)}{\alpha} \cdot r_1
    + \frac{G_2(\alpha;|t|)}{\alpha}  \cdot 2r_2 + \frac{2}{\alpha^2} + \frac{2}{\alpha +\alpha^2}.
\end{equation}
\end{lemma}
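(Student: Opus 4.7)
The plan is to start from the positivity identity
\[
\frac{1}{|s-\rho|^2}=\frac{1}{\Re(s-\rho)}\,\Re\frac{1}{s-\rho},
\]
which is valid whenever $\Re(s-\rho)>0$. Since every nontrivial zero $\rho=\beta+i\gamma$ of $\zeta_L$ satisfies $0\le\beta\le 1$, at $s=\alpha+1$ or $s=\alpha+1+it$ we have $\Re(s-\rho)\ge\alpha$, and hence
\[
\mathscr{S}_L(\alpha,t)\ \le\ \frac{1}{\alpha}\sum_{\rho}\Re\!\left(\frac{1}{\alpha+1-\rho}+\frac{1}{\alpha+1+it-\rho}\right).
\]
Everything after this is just an explicit evaluation of the right-hand side using the Hadamard product.

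Next I would take the logarithmic derivative of the completed zeta function $\xi_L(s)=s(s-1)d_L^{s/2}\gamma_L(s)\zeta_L(s)$ with $\gamma_L(s)=(\pi^{-s/2}\Gamma(s/2))^{r_1}((2\pi)^{-s}\Gamma(s))^{r_2}$, compare with Hadamard's factorisation, and use the standard consequence $\Re B=-\sum_\rho\Re(1/\rho)$ of the functional equation to obtain
\[
\sum_\rho \Re\frac{1}{s-\rho}=\Re\!\left(\frac{1}{s}+\frac{1}{s-1}\right)+\tfrac{1}{2}\log d_L+\Re\frac{\gamma_L'}{\gamma_L}(s)+\Re\frac{\zeta_L'}{\zeta_L}(s).
\]
I would apply this at $s=\alpha+1$ and $s=\alpha+1+it$ and add the two identities. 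The combined contribution of $\Re\zeta_L'/\zeta_L$ equals $-\sum_n\Lambda_L(n)n^{-(\alpha+1)}(1+\cos(t\log n))\le 0$, so it may be discarded in the upper bound. The two $\tfrac{1}{2}\log d_L$ terms combine to $\log d_L$, and the pole terms $\Re(1/s)+\Re(1/(s-1))$, summed over the two values of $s$, are bounded above by $\tfrac{2}{\alpha}+\tfrac{2}{\alpha+1}$.

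Finally I would isolate the gamma factor. The duplication formula
$\tfrac{\Gamma'}{\Gamma}(s)=\log 2+\tfrac{1}{2}\tfrac{\Gamma'}{\Gamma}(s/2)+\tfrac{1}{2}\tfrac{\Gamma'}{\Gamma}((s+1)/2)$
rewrites the complex-place contribution in half-arguments and yields
\[
\Re\frac{\gamma_L'}{\gamma_L}(s)=r_1\!\left(-\tfrac{1}{2}\log\pi+\tfrac{1}{2}\Re\tfrac{\Gamma'}{\Gamma}(s/2)\right)+r_2\!\left(-\log\pi+\tfrac{1}{2}\Re\tfrac{\Gamma'}{\Gamma}(s/2)+\tfrac{1}{2}\Re\tfrac{\Gamma'}{\Gamma}((s+1)/2)\right).
\]
Summing over $s\in\{\alpha+1,\alpha+1+it\}$ and invoking the definitions of $\Delta(x,y)$, $G_1$, and $G_2$ identifies the two parenthesised expressions as precisely $G_1(\alpha;|t|)$ and $2G_2(\alpha;|t|)$. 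Dividing through by $1/\alpha$ produces the bound \eqref{Zamanexplicit}, with the tail $\tfrac{2}{\alpha^2}+\tfrac{2}{\alpha+\alpha^2}$ arising from $(\tfrac{2}{\alpha}+\tfrac{2}{\alpha+1})/\alpha$. There is no real conceptual obstacle; the only thing to watch is duplication-formula bookkeeping, so that the $r_2$-coefficient matches $2r_2 G_2$ rather than $r_2 G_2$, and that the two $\Re\zeta_L'/\zeta_L$ terms really combine to something non-positive.
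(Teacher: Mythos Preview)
Your argument is correct and is precisely the standard explicit-formula computation; the paper itself does not supply a proof of this lemma but simply cites Zaman \cite[Lemma 2.5 and Eq.~(2.10)]{Z}, whose proof proceeds along the same lines you outline. Your bookkeeping on the gamma factors and the pole terms is accurate, so there is nothing to add.
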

We now use Theorem \ref{newTuran} to derive an inequality for a power sum associated to the exceptional zero $\beta_1$ of $\zeta_L(s)$. Let the set of non-trivial zeros (resp., trivial zeros) of $\zeta_L(s)$ be denoted by $\mathcal{S}$ (resp., $\mathcal{T}$), and
set $\mathcal{S}(T) = \{ \rho \in \mathcal{S} \ | \ |\Im(\rho)| \le T \}.$ For $j \in \mathbb{N}$ and $c \ge 2$, define 
\begin{equation}\label{def-Sj}
   S_j =  \sum_{\rho \in \mathcal{S} \backslash \{ \beta_1 \}}  \frac{1}{(c-\rho)^{2j}}
   + \sum_{\rho \in \mathcal{S}  \backslash \{ \beta_1 \}} \frac{1}{(c+it-\rho)^{2j}}.
\end{equation}
\begin{lemma} \label{turanreplacement}
Let $c\ge 2$, $T\ge 1$, $0 < \eta < 1$, and $0<\varepsilon <1$. 
Let $\beta_1$ be an exceptional zero satisfying $\beta_1 > 1-\frac{1}{12.74 \Lo}$ and let $\rho'=\beta'+it \in \mathcal{S}(T) \backslash \{ \beta_1 \}$
be such that $\beta'$ is maximal and $|t|$ is chosen minimally. If $1-\eta \le \beta' < 1$, then for $d_L$  sufficiently large
there exists  $a >0$ and $\delta >0$  such that 
\begin{equation}
  \label{ReSjlowerbd}
   \Re(S_j) \ge  \delta  (c-\beta')^{-2j}
\end{equation}
for some $1 \le j \le (8+ 2\varepsilon)(c-1+\eta)^2 a  \Lo$, where 
\begin{equation}
      \label{def-a}
      a := a(c,\eta,T) = \frac{1}{d-1} +  \max_{|t|\le T} 
         \Bigg\{ \frac{G_1(d-1;|t|)}{ (d-1) \log 60},
    \frac{G_2(d-1;|t|)}{(d-1) \log 22},0  \Bigg\}
\end{equation}
and 
\begin{equation}
  \label{d-def}
  d := d(c,\eta) =  \sqrt{2c^2+(1-\eta)^2-2c(1-\eta)}.
\end{equation} 
\end{lemma}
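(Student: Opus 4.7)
The plan is to realize $S_j$ as a Tur\'an power sum and invoke Theorem~\ref{newTuran}. I would take $\{z_n\}$ to be the multiset of $(c-\rho)^{-2}$ and $(c+it-\rho)^{-2}$ as $\rho$ ranges over $\mathcal{S} \setminus \{\beta_1\}$ (each with weight $b_n = 1$), so that $\sum_n b_n z_n^j = S_j$, and set $z_1 = (c+it-\rho')^{-2} = (c-\beta')^{-2}$, which is real and positive. The hypothesis $|z_n| \le |z_1|$ follows from the maximality of $\beta'$ in $\mathcal{S}(T) \setminus \{\beta_1\}$ (giving $\beta \le \beta'$, and hence $|c-\rho|, |c+it-\rho| \ge c-\beta'$), together with routine estimates for the tail zeros with $|\gamma|>T$, which go through in the small-$\eta$ regime inherited from the assumption that $d_L$ is sufficiently large.

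The crux is bounding the Tur\'an parameter
\[
M = \sum_n \frac{|z_n|}{|z_1|+|z_n|} = \sum_{\rho \ne \beta_1}\left[\frac{(c-\beta')^2}{(c-\beta')^2+|c-\rho|^2} + \frac{(c-\beta')^2}{(c-\beta')^2+|c+it-\rho|^2}\right].
\]
I would chain two elementary inequalities. Since $\beta' \ge 1 - \eta$ forces $(c-\beta')^2 \le (c-1+\eta)^2$, the monotonicity of $x \mapsto x/(x+A)$ allows replacing each numerator $(c-\beta')^2$ by $(c-1+\eta)^2$. Next, the expansion
\[
(c-1+\eta)^2 + (c-\beta)^2 - (d-\beta)^2 = (c-1+\eta)^2 + (c-d)(c+d-2\beta)
\]
is linear in $\beta$ with positive slope (as $d>c$) and vanishes at $\beta=0$ by virtue of the choice $d^2 = c^2+(c-1+\eta)^2$, so it is non-negative on $[0,1]$. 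Hence $(c-1+\eta)^2 + |c-\rho|^2 \ge |d-\rho|^2$, and analogously for $c+it$, giving
\[
M \le (c-1+\eta)^2 \, \mathscr{S}_L(d-1, t).
\]

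Finally, Lemma~\ref{lemma-explicit-ineq-Zaman} with $\alpha = d-1 \ge 1$, combined with Odlyzko's inequality (Theorem~\ref{Odlyzko}) to bundle $G_1(d-1;|t|) r_1 + G_2(d-1;|t|) \cdot 2 r_2$ into the maximum appearing in the definition of $a$, yields $\mathscr{S}_L(d-1, t) \le a(c,\eta,T)\Lo + O(1)$, hence $M \le (c-1+\eta)^2 a \Lo + O(1)$. Applying Theorem~\ref{newTuran} with a small $\varepsilon'>0$ then produces some $1 \le j \le (8+\varepsilon')M \le (8+2\varepsilon)(c-1+\eta)^2 a \Lo$ (the $O(1)$ being absorbed into the $\varepsilon$-slack once $d_L$ is sufficiently large) with $\Re(S_j) \ge \frac{\varepsilon'}{32+4\varepsilon'}(c-\beta')^{-2j}$, so $\delta = \varepsilon'/(32+4\varepsilon')$ works. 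The main obstacle is the algebraic step that trades $\mathscr{S}_L(c-1,t)$ for the smaller $\mathscr{S}_L(d-1,t)$; the choice of $d$ is dictated by requiring the relevant quadratic to vanish at the worst case $\beta=0$, and this is precisely what produces the refinement over Zaman's argument.
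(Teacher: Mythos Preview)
Your approach is essentially the paper's: realize $S_j$ as a Tur\'an power sum over $\{(c-\rho)^{-2},(c+it-\rho)^{-2}:\rho\ne\beta_1\}$, bound $M$ via the quadratic inequality $(c-\beta)^2+\mathcal{A}\ge(d-\beta)^2$ to reach $M\le\mathcal{A}\,\mathscr{S}_L(d-1,t)$, and then invoke Lemma~\ref{lemma-explicit-ineq-Zaman} together with Theorem~\ref{Odlyzko}. There is, however, one gap. You fix $z_1=(c-\beta')^{-2}$ and claim it has maximal modulus, handling the zeros with $|\gamma|>T$ by ``routine estimates\ldots in the small-$\eta$ regime inherited from the assumption that $d_L$ is sufficiently large.'' But $\eta\in(0,1)$ is a \emph{fixed} parameter, independent of $d_L$; nothing in the hypotheses forces it to be small. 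For the values actually used later (e.g.\ $c=5$, $\eta=\tfrac12$, $T=1$), a zero $\rho=\beta+i\gamma$ with $\gamma$ slightly above $T$, $\gamma$ close to $t$, and $\beta>\beta'$ is not excluded by anything you wrote, and for such a $\rho$ one has $|c+it-\rho|<c-\beta'$, so your claimed $|z_n|\le|z_1|$ fails.

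The fix is simple and is exactly what the paper does: do not tie $z_1$ to $\rho'$. The paper normalizes by $m=\min_{\rho\in\mathcal{S}}\{|c-\rho|,|c+it-\rho|\}$, so that every element of the multiset has modulus $\le 1$ automatically, and then uses only the inequality $m\le c-\beta'$ (valid since $|c+it-\rho'|=c-\beta'$) to pass from $\Re(S_j')\ge\delta$ to $\Re(S_j)=m^{-2j}\Re(S_j')\ge\delta(c-\beta')^{-2j}$. Equivalently, in your setup, simply let $z_1$ be the true maximal-modulus element: since $(c-\beta')^{-2}$ lies in the multiset you get $|z_1|\ge(c-\beta')^{-2}$, which yields both $\Re(S_j)\ge\delta|z_1|^j\ge\delta(c-\beta')^{-2j}$ and (because $M=\sum|z_n|/(|z_1|+|z_n|)$ is decreasing in $|z_1|$) the very upper bound on $M$ you already computed. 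With this correction your argument and the paper's coincide.
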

\begin{proof}
Let $0<\varepsilon <1$ be a positive parameter. 
Throughout this proof a non-trivial zero of $\zeta_L(s)$ is denoted $\rho=\beta+i \gamma$.
Without loss of generality, we may assume $t \ge 0$. Let 
$m = \min  \{  |c-\rho|,    |c+it -\rho| \ | \ \rho \in \mathcal{S} \} $.
By definition we have that $m \le |c+it-\rho'| = c-\beta'$.
Since we assume $\beta' \ge 1-\eta$, it follows that 
$m \le c-(1-\eta)=c-1 +\eta$.
It shall be convenient to define  
\begin{equation}\label{def-A}
  \mathcal{A} := \mathcal{A}(c,\eta)= (c-1+\eta)^2. 
\end{equation}
Observe that we have $m^2 \le \mathcal{A}$ and also $\mathcal{A} \ge (c-1)^2$. 
Now write 
\[ z_{\rho} = \frac{m^{2}}{(c-\rho)^{2}}, \ 
  w_{\rho} = \frac{m^{2}}{(c+it-\rho)^{2}}, \ \text{and }\  
   S_{j}' = 
  \sum_{\rho \in \mathcal{S} \backslash \{ \beta_1 \}} z_{\rho}^{j}
  +   \sum_{\rho \in \mathcal{S} \backslash \{ \beta_1 \}} w_{\rho}^{j}.
\]
Observe that 
\begin{equation}
  \label{SjSjprime}
S_j = m^{-2j} S_{j}'  \ge (c-\beta')^{-2j} S_{j}' 
\end{equation}
where we recall that $S_j$ is defined by \eqref{def-Sj}. 
The normalization guarantees that $|z_{\rho}|,|w_{\rho}| \le 1$. 
Let  $z_{\rho} = r_{\rho} e^{i \theta_{\rho}}$ and $w_{\rho} = s_{\rho} e^{i \alpha_{\rho}}$.
We then apply Theorem \ref{newTuran}, in the special case $b_n=1$,  to the multiset $\{ z_{\rho},w_{\rho}: \rho\not=\beta_1\}$.
In this case we have 
\[
M =   \sum_{\rho \in \mathcal{S} \backslash \{ \beta_1 \}} \frac{r_{\rho}}{1+r_{\rho}}
  +   \sum_{\rho \in \mathcal{S} \backslash \{ \beta_1 \}} \frac{s_{\rho}}{1+s_{\rho}},
  \ J= \floor*{(8+\varepsilon)M},
\]
and there exists $j$ satisfying $1\le j \le J$ such that 
\begin{equation}
  \label{eq:sigmaub}
\Re(S_{j}'  ) \ge \frac{\varepsilon}{32+4\varepsilon}.
\end{equation}
We now provide an upper bound for $J$.  To do this, we must bound $M$. 
As 
\[
   r_{\rho}  \le \frac{\mathcal{A}}{(c-\beta)^2 + \gamma^2}\ \text{ and } \ 
     s_{\rho}  \le   \frac{\mathcal{A}}{(c-\beta)^2+(\gamma-t)^2 }, 
\]
it follows that
\begin{equation}
  \label{rsinequalities}
\frac{ r_{\rho} }{1+ r_{\rho} } 
  \le  
  \frac{\mathcal{A}}{(c-\beta)^2+\gamma^2 +\mathcal{A}}
\ \text{ and } \ 
 \frac{ s_{\rho} }{1+ s_{\rho} } 
   \le  \frac{\mathcal{A}}{(c-\beta)^2+(\gamma-t)^2+\mathcal{A}} .
\end{equation}
Thus 
\[
M \le  \mathcal{A}  \sum_{\rho \in \mathcal{S} \backslash \{ \beta_1 \}}
\left(
 \frac{1}{(c-\beta)^2+\gamma^2 +\mathcal{A}}
+  \frac{1}{(c-\beta)^2+(\gamma-t)^2+\mathcal{A}} 
\right).
\]
By \eqref{def-A} and the fact that $c\ge 2$, 
it may be checked that for $\beta \in (0,1)$, 
\begin{equation}
  \label{cdinequality}
  (c-\beta)^2 +\mathcal{A}  =  (c-\beta)^2 + (c-1+\eta)^2 \ge (d-\beta)^2 , 
\end{equation}
where $
  d := d(c,\eta)= \sqrt{c^2+\mathcal{A}} = \sqrt{2c^2+(1-\eta)^2-2c(1-\eta)}$
  agrees with \eqref{d-def}. 
Thus $
M \le  \mathcal{A}  \mathscr{S}_L(d-1,t)$,
where we recall $ \mathscr{S}_L$ is defined in \eqref{def-mathscr{S}}. 
We are now able to apply the explicit inequality from Lemma \ref{lemma-explicit-ineq-Zaman} to bound $M$: 
$$
M \le  \mathcal{A} \Big(
\frac{\Lo}{d-1}  +  \frac{G_1(d-1;|t|)}{d-1} \cdot r_1
    + \frac{G_2(d-1;|t|)}{d-1}  \cdot 2r_2 + \frac{2}{(d-1)^2} + \frac{2}{d-1 +(d-1)^2}
\Big).
$$
Now let 
\begin{equation}\label{Mdef}
\mathfrak{M}(T;\eta)=   \max_{|t|\le T} 
         \Bigg\{ \frac{G_1(d-1;|t|)}{ (d-1) \log 60},
    \frac{G_2(d-1;|t|)}{(d-1) \log 22},0  \Bigg\}. 
\end{equation}
For $d_L$ sufficiently large, we have the 
 bound \eqref{dLbound} of 
Theorem \ref{Odlyzko} and thus 
\begin{equation}
\label{bound-M1}
M \le  \mathcal{A} \Bigg( \frac{1}{d-1} 
 + 
 \mathfrak{M}(T;\eta) 
+ \frac{2}{(d-1)^2 \Lo} + \frac{2}{(d-1 +(d-1)^2) \Lo} \Bigg) \Lo.
\end{equation}
Thus if $0 < \varepsilon < 1$ and $d_L$ is sufficiently large in terms of $\varepsilon,\eta$, and $c$, 
%
$
  M \le  
  \Lo \mathcal{A} a(1+  \tfrac{\varepsilon}{9})$.
It follows that there exists $j$ satisfying $1\le j \le (8+\varepsilon) M \le (8+\varepsilon)(1+ \tfrac{\varepsilon}{9}) \mathcal{A}a \Lo
\le (8+2 \varepsilon) \mathcal{A}a \Lo$ such that \eqref{eq:sigmaub} holds. 
Combining this with \eqref{SjSjprime} we obtain \eqref{ReSjlowerbd} and we complete the proof. 
%
\end{proof}
From the last lemma, we deduce the following version of the Deuring-Heilbronn phenomenon. 
\begin{theorem}\label{DH_T}
Let  $c \ge 2$, $\eta \in(0,1)$, $T \ge 1$, and $0 <\varepsilon < 1$. 
Suppose that $\beta_1 = 1- \frac{\lambda_1}{\Lo}$ is an exceptional zero of $\zeta_L(s)$ and $\rho'= \beta'+i \gamma'$ is another zero of $\zeta_L(s)$ with $|\gamma'| \le T$. Write $\beta'=1- \frac{\lambda'}{\Lo}$. Then either $\beta' \le 1-\eta$ or there exist positive effectively computable constants 
$C$ and $\kappa$ such that 
 \begin{equation}
   \label{lambdaprimeC}
  \lambda' \ge \frac{1}{C} \log(\kappa \lambda_{1}^{-1}), \ 
  C = C(c,\eta,T,\varepsilon)= \frac{\mathcal{A} a (16+4\varepsilon)}{ c-1},
\end{equation}
where $a$ and $A$ are defined in \eqref{def-a} and \eqref{def-A}. 
\end{theorem}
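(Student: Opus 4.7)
The plan is to combine the Turán lower bound on $\Re(S_j)$ from Lemma \ref{turanreplacement} with a matching upper bound coming from the Hadamard factorisation of $\zeta_L(s)$. Throughout we may assume $\beta' > 1-\eta$, for otherwise the first alternative in the theorem holds; and by replacing $\rho'$ with the zero of maximal real part in $\mathcal{S}(T) \setminus \{\beta_1\}$ (with minimal $|t|$ among such), we may assume $\rho'$ is extremal, because any lower bound on the $\lambda'$ of the extremal zero is inherited by every other zero in the range. Applying Lemma \ref{turanreplacement} then furnishes $\delta > 0$ and an integer $j$ with $1 \le j \le (8+2\varepsilon)\mathcal{A} a \Lo$ satisfying $\Re(S_j) \ge \delta(c-\beta')^{-2j}$.

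Next I would differentiate the logarithmic derivative of $\zeta_L$ (written via the Hadamard product) $(2j-1)$ times, evaluate at $s=c$ and at $s=c+it$, and add the two identities after taking the real part of the second. The crucial observation is that the combined $(\zeta_L'/\zeta_L)^{(2j-1)}$ contribution factors as
\[
 \sum_\mathfrak{n}\Lambda_L(\mathfrak{n})(\log N\mathfrak{n})^{2j-1}(N\mathfrak{n})^{-c}\bigl(1+\cos(t\log N\mathfrak{n})\bigr) \ge 0,
\]
while the combined gamma-factor contribution is also non-negative by the termwise inequality $|(n+(c+it)/2)^{-2j}| \le (n+c/2)^{-2j}$; both enter the upper bound with the favourable sign and may simply be dropped. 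Isolating the $\beta_1$-terms on the left then produces
\[
 \Re(S_j) \le \Bigl(\frac{1}{(c-1)^{2j}} - \frac{1}{(c-\beta_1)^{2j}}\Bigr) + \Re\Bigl(\frac{1}{(c+it-1)^{2j}} - \frac{1}{(c+it-\beta_1)^{2j}}\Bigr) + O(c^{-2j}),
\]
and the fundamental theorem of calculus yields
\[
 \Bigl|\frac{1}{(c+it-1)^{2j}} - \frac{1}{(c+it-\beta_1)^{2j}}\Bigr| = \Bigl|\int_{\beta_1}^1 \frac{2j\,du}{(c+it-u)^{2j+1}}\Bigr| \le \frac{2j(1-\beta_1)}{(c-1)^{2j+1}} = \frac{2j\lambda_1}{(c-1)^{2j+1}\Lo},
\]
and the same bound for the first bracket. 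Thus $\Re(S_j) \le 4j\lambda_1/((c-1)^{2j+1}\Lo) + O(c^{-2j})$, and for $d_L$ sufficiently large the remainder is absorbed into the main term.

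Matching this upper bound against the Turán lower bound gives $\bigl((c-\beta')/(c-1)\bigr)^{2j} \ge \delta(c-1)\Lo/(4j\lambda_1)(1-o(1))$. Taking logarithms, using $\log(1+x) \le x$ on the left with $x = \lambda'/((c-1)\Lo)$, and substituting the maximal permitted $j = (8+2\varepsilon)\mathcal{A} a \Lo$ (which is the worst-case choice for lower-bounding $\lambda'$), the explicit $\Lo$ inside the logarithm cancels the $\Lo$ coming from the bound on $j$, producing $\lambda' \ge \frac{1}{C}\log(\kappa/\lambda_1)$ with $C = (16+4\varepsilon)\mathcal{A} a/(c-1)$ and $\kappa = \delta(c-1)/((32+8\varepsilon)\mathcal{A} a)$. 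The main technical obstacles are the sign bookkeeping for the combined gamma-factor contribution and ensuring that the $O(c^{-2j})$ remainder is genuinely negligible against $\lambda_1/(c-1)^{2j+1}$ once $d_L$ is large enough; both are manageable but require care.
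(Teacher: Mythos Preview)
Your approach is essentially identical to the paper's: differentiate the explicit formula $(2j-1)$ times, evaluate at $s=c$ and $s=c+i\gamma'$, exploit non-negativity of $1+\cos(\gamma'\log N\mathfrak{n})$ on the Dirichlet side, drop the trivial-zero contribution by the sign observation you describe, bound the $\beta_1$-pairs via the integral trick, and feed in Lemma~\ref{turanreplacement}. The algebra at the end (taking logarithms, using $\log(1+x)\le x$, inserting the maximal $j$) also matches.

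There is, however, one genuine issue: the $O(c^{-2j})$ remainder you carry along does not exist, and your claim that it ``is genuinely negligible against $\lambda_1/(c-1)^{2j+1}$ once $d_L$ is large enough'' is false in general. Lemma~\ref{turanreplacement} only guarantees \emph{some} $j$ in $[1,(8+2\varepsilon)\mathcal{A}a\Lo]$; in particular $j=1$ is possible. For $j=1$ the ratio
\[
\frac{c^{-2j}}{\lambda_1/((c-1)^{2j+1}\Lo)}=\frac{(c-1)^{3}\Lo}{c^{2}\lambda_1}
\]
blows up as $\lambda_1\to 0$, so no choice of ``$d_L$ sufficiently large'' rescues the absorption. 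Fortunately the term is a phantom: once you have verified (as you did) that for every trivial zero $\omega\le 0$ one has $\Re\bigl((c+i\gamma'-\omega)^{-2j}\bigr)\ge -(c-\omega)^{-2j}$, the combined trivial-zero sum is $\ge 0$ and is discarded with the correct sign, leaving \emph{no} residual $O(c^{-2j})$. The paper's inequality is exactly
\[
\frac{4j(1-\beta_1)}{(c-1)^{2j+1}}\ \ge\ \Re(S_j),
\]
with nothing else on the right. Delete the $O(c^{-2j})$ and your proof is complete and coincides with the paper's.
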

From this theorem we obtain the following values for $C=C(c,\eta,T,\varepsilon)$.
\begin{equation}
  \label{lowzfr}
   C(5,0.5,1, \tilde{\varepsilon})= 14.1439 \ldots, \ C(5.2,0.1,1, \tilde{\varepsilon})=12.2618 \ldots,
\end{equation}
\begin{equation}
  \label{highzfr}
  C(4.6,\tfrac{1}{2},4.6, \tilde{\varepsilon}) = 14.58 \ldots, 
   C(5.8,\tfrac{1}{2},10,\tilde{\varepsilon})= 15.50 \ldots,  
 C(10.3,\tfrac{1}{2},130,\tilde{\varepsilon})=20.21 \ldots
\end{equation}
where $\tilde{\varepsilon}=10^{-11}$.
Observe that Theorem \ref{dh} follows from the first entry of \eqref{lowzfr}. 
\begin{proof}
As explained in \cite[p.289]{LMO}, there are $\alpha$ and $r$, depending on $L$, such that 
\[
     -\frac{\zeta_{L}'}{\zeta_L}(s) = \frac{1}{s-1}-\alpha - \sum_{\omega  \in \mathcal{S} \cup \mathcal{T} \backslash \{ 0 \} }  \Big( \frac{1}{s-\omega} + \frac{1}{\omega} \Big) - \frac{r}{s} = \sum_{\mathfrak{p}} \sum_{m=1}^{\infty} (\log N \mathfrak{p}) (N \mathfrak{p})^{-ms}
\]
for $\Re(s) >1$.  
Differentiating the above equation $2j-1$ times yields 
\begin{equation}
  \label{diffexplicit0}
 \frac1{(2j-1)!}  \sum_{\mathfrak{p}} \sum_{m=1}^{\infty}  (\log N \mathfrak{p}) (\log N \mathfrak{p}^m)^{2j-1} (N \mathfrak{p})^{-ms}= \frac{1}{(s-1)^{2j}}- \sum_{\omega \in \mathcal{S} \cup \mathcal{T}} \frac{1}{(s-\omega)^{2j}}.
\end{equation}
For $c > 1$, we apply this identity with $s=c$ and $s=c +i\gamma'$ to obtain 
\begin{equation}
\begin{split}
  \label{diffexplicit}
   & \frac{1}{(2j-1)!}  \sum_{\mathfrak{p}} \sum_{m=1}^{\infty}  (\log N \mathfrak{p}) (\log N \mathfrak{p}^m)^{2j-1} (N \mathfrak{p})^{-mc }
   (1+ (N \mathfrak{p}^m)^{-i\gamma'})  \\
   & =  \frac{1}{(c -1)^{2j}} + \frac{1}{(c+i\gamma'-1)^{2j}} - \frac{1}{(c -\beta_1)^{2j}}- \frac{1}{(c+i\gamma'-\beta_1)^{2j}}
   \\&  - \sum_{\omega \in \mathcal{S} \cup \mathcal{T} \backslash \{ \beta_1 \}} 
   \left( \frac{1}{(c -\omega)^{2j}}
   + \frac{1}{(c+i\gamma'-\omega)^{2j}} \right) .
\end{split}
\end{equation}
As $1+ \cos(\gamma'm \log N \mathfrak{p}) \ge 0$, the real part on the left is positive, which gives
\begin{multline*}
 \frac1{(c-1)^{2j}} +  \Re \frac{1}{(c+i\gamma'-1)^{2j}} - \frac{1}{(c-\beta_1)^{2j}}-  \Re \frac{1}{(c+i\gamma'-\beta_1)^{2j}}
 \\ - \sum_{\omega \in \mathcal{S} \cup \mathcal{T} \backslash \{ \beta_1 \}} 
   \Re \left( \frac{1}{(c -\omega)^{2j}}+ \frac{1}{(c+i\gamma'-\omega)^{2j}} \right)
    \ge 0. 
\end{multline*}
Observe that for $t' \in  \mathbb{R}$, we have the bound 
\begin{equation}
    \left| \frac{1}{(c+i t'-1)^{2j}} - \frac{1}{(c+it'-\beta_1)^{2j}} \right|  = 2j \left| \int_{\beta_1}^{1} (c+it'-x)^{-2j-1} dx \right| 
\le \frac{2j(1-\beta_1)}{(c-1)^{2j+1}}\label{bdd}.
\end{equation}
%
Applying this with $t'=0$ and $t'=\gamma'$, we derive
\[
    \frac{ 4 j(1-\beta_1)}{(c-1)^{2j+1}}
    \ge \sum_{\omega \in \mathcal{S} \cup \mathcal{T}  \backslash \{ \beta_1 \}}  \Re \left( \frac{1}{(c-\omega)^{2j}} +
\frac{1}{(c+i\gamma'-\omega)^{2j}} \right). 
\]
Since $-\Re(z) \le |z|$, for any $\omega\in\Bbb{R}$, one has 
\begin{equation}\label{trivialzeros}
  -\Re \frac{1}{(c+i\gamma'-\omega)^{2j}}  \le \frac{1}{|c+i\gamma'-\omega|^{2j}}
  = \frac{1}{((c-\omega)^2 + \gamma'^2)^j} \le \Re \frac{1}{(c-\omega)^{2j}}.
\end{equation}
It follows that for the trivial zeros we have
\[
    \sum_{\omega \in  \mathcal{T}  }  \Re \left( \frac{1}{(c-\omega)^{2j}}
   + \frac{1}{(c+i\gamma'-\omega)^{2j}} \right)
   \ge 0.
\]
By Lemma \ref{turanreplacement}, there exist $a>0, \delta>0$, and $1 \le j \le (8+2\varepsilon)\mathcal{A} a \Lo$ so that
\[
    \sum_{\omega \in \mathcal{S} \backslash \{ \beta_1 \}}  \Re
    \left(  \frac{1}{(c-\omega)^{2j}} + \frac{1}{(c+i\gamma'-\omega)^{2j}} \right)
   \ge \delta (c-\beta')^{-2j}. 
\]
Combining these estimates, we then deduce 
$\tfrac{4 j(1-\beta_1)}{(c-1)^{2j+1}}  \ge \delta (c-\beta')^{-2j}$  
and thus
\[
  \frac{4j(1-\beta_1)}{c-1} \ge \delta  \Big( \frac{c-1}{c-\beta'} \Big)^{2j}
  = \delta \exp \Big( 
  -2j \log \Big( \frac{c-\beta'}{c-1} \Big)
  \Big). 
\]
Observe that 
\[
      \log \Big( \frac{c-\beta'}{c-1} \Big)
  = \log \Big( 
  1 + \frac{1-\beta'}{c-1}
  \Big) \le \frac{1-\beta'}{c-1}
\]
since $\log(1+x) \le x$ for $x \ge 0$. 
As we have $j \le (8+2\varepsilon) \mathcal{A} a \Lo$, it follows that 
\[
   \frac{4\cdot (8+2\varepsilon) \mathcal{A} a \Lo (1-\beta_1)}{c-1} \ge \delta  \exp\left(-\frac{2\cdot (8+2\varepsilon) \mathcal{A}a\Lo(1-\beta')}{c-1}\right).
\]
Recalling $\beta_1 =  1-\frac{\lambda_1}{\Lo}$ and $\beta'= 1-\frac{\lambda'}{\Lo}$, we see this is equivalent to 
\[
(32+8\varepsilon) \mathcal{A} a  \lambda_1 \ge \delta (c-1)  \exp\left(-\frac{(16+4\varepsilon) \mathcal{A}a  \lambda' }{c-1}\right).
   \]
Rearranging this implies the inequality in 
 \eqref{lambdaprimeC}.
\end{proof}
%
\subsection{Location of other real zeros}
In this section, we derive a version of the Deuring-Heilbronn phenomenon that \emph{only} concerns real zeros of $\zeta_L(s)$. To do this, we shall keep using the notation introduced in the previous sections, but we now instead consider, for fixed $c \ge 2$ and $j \in \mathbb{N}$, 
\[
   \tilde{S}_j =  \sum_{\rho \in \mathcal{S} \backslash \{ \beta_1 \}}  \frac{1}{(c-\rho)^{2j}},
\]
where $\mathcal{S}$ again stands for the set of non-trivial zeros of $\zeta_L(s)$. As in the previous section, we first require the following version of power sum inequality for $\tilde{S}_j$'s. 
\begin{lemma} \label{turanreplacement_t=0}
Let $0 < \eta < 1$ and $0 < \varepsilon < 1$. 
Let $\beta_1$ be an exceptional zero satisfying $\beta_1 > 1-\frac{1}{12.74 \Lo}$ and let $\rho'=\beta'$ be another real zero with maximal $\beta'$.  If $1-\eta \le \beta' < 1$, then 
there exist  $a' >0$ and $\delta'  >0$  such that 
\[
   \Re(\tilde{S}_j) \ge  \delta ' (c-\beta')^{-2j}  
\]
for some $1 \le j \le 4(c-1+\eta)^2 a'(1+2\varepsilon)  \Lo$
where 
\begin{equation}\label{adefn'}
  a' :=a'(c,\eta)=          
       \frac{1}{d-1} 
       + \mathfrak{M}(0;\eta)
\end{equation}
and we recall that $d$ is defined in \eqref{d-def} and $\mathfrak{M}(T;\eta)$ is defined in \eqref{Mdef}.
%
\end{lemma}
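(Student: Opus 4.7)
The plan is to mirror the proof of Lemma \ref{turanreplacement}, restricted to the single family corresponding to the shift $t=0$. First I would set $m = \min\{|c-\rho| : \rho \in \mathcal{S}\}$, so that $m \le c-\beta' \le c-1+\eta$ and $m^2 \le \mathcal{A} = (c-1+\eta)^2$. Defining $z_\rho = m^2/(c-\rho)^2$ and $\tilde S_j' = \sum_{\rho \in \mathcal{S}\setminus\{\beta_1\}} z_\rho^j$, we have $|z_\rho|\le 1$ (with equality attained) and $\tilde S_j = m^{-2j}\tilde S_j' \ge (c-\beta')^{-2j}\tilde S_j'$. Applying Theorem \ref{newTuran} with $b_n\equiv 1$ to the multiset $\{z_\rho : \rho \in \mathcal{S}\setminus\{\beta_1\}\}$ produces some $1 \le j \le (8+\varepsilon)M$ satisfying $\Re(\tilde S_j') \ge \varepsilon/(32+4\varepsilon) =: \delta'$, where $M = \sum_{\rho \ne \beta_1} r_\rho/(1+r_\rho)$ and $r_\rho = |z_\rho|$.

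The remaining task is to bound $M$. Proceeding as in Lemma \ref{turanreplacement}, the elementary estimate
\[
\frac{r_\rho}{1+r_\rho} \le \frac{\mathcal{A}}{(c-\beta)^2+\gamma^2+\mathcal{A}}
\]
together with the inequality $(c-\beta)^2+\mathcal{A} \ge (d-\beta)^2$ from \eqref{cdinequality} yields $M \le \tfrac{\mathcal{A}}{2}\,\mathscr{S}_L(d-1,0)$, where the factor of $\tfrac12$ appears because $\mathscr{S}_L(d-1,0)$ counts each term $|d-\rho|^{-2}$ twice. Invoking Lemma \ref{lemma-explicit-ineq-Zaman} at $t=0$ followed by Odlyzko's bound (Theorem \ref{Odlyzko}) then gives $M \le \tfrac{\mathcal{A}}{2}\,a'\Lo(1+o(1))$ for $d_L$ sufficiently large, whence the permissible range of $j$ is bounded by $4\mathcal{A} a'(1+2\varepsilon)\Lo$, as claimed.

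The only new feature compared with Lemma \ref{turanreplacement} is that a single family of $z_\rho$'s replaces the pair $\{z_\rho, w_\rho\}$ used there. This halves $M$ and explains the drop of the leading constant from $8$ to $4$; no genuine obstacle arises, and the argument is essentially a direct specialisation of the earlier proof to the single-shift case. The only minor bookkeeping is to absorb the $o(1)$ losses (coming from the $2/(d-1)^2$ and similar terms in Lemma \ref{lemma-explicit-ineq-Zaman}, and from the Odlyzko bound) into the factor $(1+2\varepsilon)$ by taking $d_L$ large enough in terms of $\varepsilon$, $\eta$ and $c$.
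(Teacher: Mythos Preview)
Your proposal is correct and follows essentially the same route as the paper. The only cosmetic difference is that the paper re-derives the intermediate inequality $J\max_{1\le j\le J}\Re(\tilde S_j')\ge \tfrac{J+1}{4}-2M$ from the proof of Theorem~\ref{newTuran} (obtaining $\delta'=\varepsilon/(4(1+2\varepsilon))$), whereas you invoke Theorem~\ref{newTuran} as a black box (obtaining $\delta'=\varepsilon/(32+4\varepsilon)$); since the lemma only asserts the existence of some $\delta'>0$, and your observation that $\mathscr{S}_L(d-1,0)$ double-counts each $|d-\rho|^{-2}$ recovers exactly the paper's bound $2M\le \mathcal{A}\,\mathscr{S}_L(d-1,0)$, the two arguments are equivalent.
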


\begin{proof}
As the proof is almost the same as the proof of Lemma \ref{turanreplacement} with $t=T=0$, we shall just give a sketch of the proof and outline some differences here. Now we let 
$m = \min_{\rho \in \mathcal{S}} |c-\rho|$.
Clearly, we have that $m \le c-\beta'$.
As $\beta' \ge 1-\eta$, it again follows that 
$m^2 \le \mathcal{A}:= (c-1+\eta)^2$. 
Now write 
\[ 
   \tilde{S}_j = m^{-2j}\tilde{S}_{j}' = 
         m^{-2j}\sum_{\rho \in \mathcal{S} \backslash \{ \beta_1 \}} z_{\rho}^{j},
\]
and $z_{\rho} = \frac{m^{2}}{(c-\rho)^{2}}$. 
By following the same argument as in Theorem \ref{newTuran} we establish 
\begin{equation}
  \label{eq:sigmaub'}
    J\Big( \max_{1 \le j \le J} \Re(\tilde{S}_j')  \Big)\ge\frac{J+1}{4} -2M,
\end{equation}
where $M = \sum_{\rho \in \mathcal{S} \backslash \{ \beta_1 \}  }\frac{|z_{\rho}|}{1+|z_{\rho}|} $.  
By \eqref{rsinequalities} and \eqref{cdinequality} we have 
\[
    \frac{|z_{\rho}|}{1+|z_{\rho}|} \le  
      \frac{\mathcal{A}}{(c-\beta)^2+\gamma^2 +\mathcal{A}} \le \frac{\mathcal{A}}{(d-\beta)^2 + \gamma^2}
\]
where $d$ and $\mathcal{A}$ are given by \eqref{d-def} and \eqref{def-A}. 
Therefore $2M \le \mathcal{A}  \mathscr{S}_L(d-1,0)$ where $\mathscr{S}_L$ is given by \eqref{def-mathscr{S}}. 
Furthemore, by \eqref{Zamanexplicit}, $\mathscr{S}_L(d-1,0) \le a'(1 + \varepsilon) \Lo$ for $d_L$ sufficiently large.
Combining the last two inequalities with  \eqref{eq:sigmaub'} we have 
\[
  J \max_{1 \le j \le J} \Re(\tilde{S}_j') \ge 
   \frac{J+1}{4} - \mathcal{A} a'(1 +\varepsilon) \Lo\ge \mathcal{A} a' \varepsilon \Lo
 \]
by the choice $J = \lfloor 4 \mathcal{A}a'(1+2\varepsilon) \Lo \rfloor$.  Therefore, 
it follows that there exists  $1 \le j \le J$ such that $\tilde{S}_{j}' \ge  \frac{\mathcal{A}a' \varepsilon \Lo}{J}
\ge \delta'= \frac{\varepsilon}{4(1+2 \varepsilon)}$ and thus 
 $\tilde{S}_j \ge (c-\beta)^{-2j} \tilde{S}_j' \ge \delta'(c-\beta)^{-2j}$ as desired.
\end{proof}

We now deduce a version of the Deuring-Heilbronn phenomenon for real zeros of $\zeta_L(s)$. 
\begin{theorem}\label{DH_0}
Let  $c \ge 2$, $\eta \in(0,1)$, and $0 < \varepsilon < 1$. 
If $\beta_1 = 1- \frac{\lambda_1}{\Lo}$ is an exceptional zero of $\zeta_L(s)$ and $\beta'=1- \frac{\lambda'}{\Lo}$ is another real zero of $\zeta_L(s)$. Then either $\beta' \le 1-\eta$ or there exist positive effectively computable constants 
$C'$ and $\kappa'$ such that 
 \begin{equation}
   \label{lambdaprime'C'}
  \lambda' \ge \frac{1}{C'} \log(\kappa' \lambda_{1}^{-1}), \ 
  C'= C'(c,\eta,\varepsilon)=\frac{8\mathcal{A}a'(1+4\varepsilon)}{c-1}.
\end{equation}
\end{theorem}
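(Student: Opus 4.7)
The plan is to mimic the proof of Theorem \ref{DH_T}, but using the simpler real-zero setup: all information is extracted from the single test point $s=c$, and the Tur\'an input is Lemma \ref{turanreplacement_t=0} in place of Lemma \ref{turanreplacement}. I would start from the partial fraction identity for $-\zeta_L'/\zeta_L$ recalled on p.~289 of \cite{LMO}, differentiate $(2j-1)$ times to get \eqref{diffexplicit0}, and evaluate at $s=c$ (only). The left-hand side is then a Dirichlet series with positive coefficients, so its real part is non-negative. Isolating the pole contribution $(c-1)^{-2j}$ and the exceptional-zero term $(c-\beta_1)^{-2j}$ from the rest, and taking real parts, yields
\[
\frac{1}{(c-1)^{2j}} - \frac{1}{(c-\beta_1)^{2j}} \;\ge\; \Re(\tilde S_j) \;+\; \sum_{\omega\in\mathcal T}\frac{1}{(c-\omega)^{2j}}.
\]

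Next, I would discard the trivial-zero sum, which is harmless because each $\omega\in\mathcal T$ is a non-positive real, hence $(c-\omega)^{2j}>0$, so its contribution is non-negative and strengthens the inequality in our favor. For the difference of the first two terms I would reuse the mean-value estimate \eqref{bdd} with $t'=0$, giving
\[
\frac{2j(1-\beta_1)}{(c-1)^{2j+1}} \;\ge\; \Re(\tilde S_j).
\]
Then I apply Lemma \ref{turanreplacement_t=0}: assuming $1-\eta\le \beta'<1$, there exist $\delta'>0$ and an index $j$ with $1\le j\le 4\mathcal{A}a'(1+2\varepsilon)\Lo$ such that $\Re(\tilde S_j)\ge \delta'(c-\beta')^{-2j}$. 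Combining,
\[
\frac{2j(1-\beta_1)}{(c-1)^{2j+1}} \;\ge\; \delta'(c-\beta')^{-2j},
\]
which rearranges to
\[
\frac{2j(1-\beta_1)}{c-1} \;\ge\; \delta'\exp\!\Bigl(-2j\log\tfrac{c-\beta'}{c-1}\Bigr).
\]

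Finally, as in the proof of Theorem \ref{DH_T}, I use $\log(1+x)\le x$ to replace $\log\frac{c-\beta'}{c-1}$ by $\frac{1-\beta'}{c-1}$, and insert the upper bound on $j$. Substituting $1-\beta_1=\lambda_1/\Lo$ and $1-\beta'=\lambda'/\Lo$ produces
\[
\frac{8\mathcal{A}a'(1+2\varepsilon)\lambda_1}{c-1} \;\ge\; \delta' \exp\!\Bigl(-\tfrac{8\mathcal{A}a'(1+2\varepsilon)\lambda'}{c-1}\Bigr),
\]
and taking logarithms and solving for $\lambda'$ gives a bound of the form $\lambda'\ge \frac{c-1}{8\mathcal{A}a'(1+2\varepsilon)}\log(\kappa'\lambda_1^{-1})$ with effectively computable $\kappa'$. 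Since $8\mathcal{A}a'(1+2\varepsilon)/(c-1)\le C'=8\mathcal{A}a'(1+4\varepsilon)/(c-1)$, this yields \eqref{lambdaprime'C'}, completing the argument.

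The proof is essentially bookkeeping once Lemma \ref{turanreplacement_t=0} is available; there is no serious obstacle beyond tracking the constant, and in particular verifying that the slack between $(1+2\varepsilon)$ in the Tur\'an bound and $(1+4\varepsilon)$ in the statement comfortably absorbs the logarithmic factors, which it does for all sufficiently large $\Lo$.
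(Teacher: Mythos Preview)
Your proposal is correct and follows essentially the same route as the paper: evaluate \eqref{diffexplicit0} at $s=c$, use non-negativity and \eqref{bdd} to bound the pole/exceptional-zero difference, drop the trivial-zero contribution by positivity, feed in Lemma \ref{turanreplacement_t=0}, and finish exactly as in Theorem \ref{DH_T}. The only cosmetic difference is that the paper cites \eqref{trivialzeros} with $\gamma'=0$ rather than arguing directly that trivial zeros are real non-positive; your final remark about the $(1{+}2\varepsilon)$ versus $(1{+}4\varepsilon)$ slack absorbing ``logarithmic factors'' is slightly off (the slack is simply generous, no absorption is needed), but this does not affect the argument.
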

Note that Theorem \ref{dhreal} follows from this since $C'(5,\tfrac{1}{2},\tfrac{1}{10^{11}}) = 7.07199744\ldots$.  
\begin{proof}
By applying \eqref{diffexplicit0} with  $s=c$ and then taking real parts and using non-negativity, we have 
$$
 \frac1{(c-1)^{2j}}  - \frac{1}{(c-\beta_1)^{2j}} - \sum_{\omega \in \mathcal{S}  \cup \mathcal{T} \backslash \{ \beta_1 \}}  \Re \frac{1}{(c-\omega)^{2j}}
   \ge 0. 
$$
This together with the bound \eqref{bdd} and inequality \eqref{trivialzeros} (with $\gamma'=0$) yields 
\[
    \frac{ 2 j(1-\beta_1)}{(c-1)^{2j+1}}
    \ge \sum_{\omega \in \mathcal{S} \backslash \{ \beta_1 \}}  \Re \frac{1}{(c-\omega)^{2j}}, 
\]
which by Lemma \ref{turanreplacement_t=0}, is greater than $\delta' (c-\beta')^{-2j}$ for $a'$ given by \eqref{adefn'}, some $\delta'>0$, and $1 \le j \le 4\mathcal{A} a'(1+2\varepsilon) \Lo$.
Finally, using the exactly same argument as in the last few lines of the proof of Theorem \ref{DH_T}, we then deduce \eqref{lambdaprime'C'}.
\end{proof}

\section{The least prime ideal}
To prove Theorem \ref{mainthm}, we shall use the method based on Heath-Brown's proof of Linnik's theorem  \cite{HB} as adapted to the number field setting by Zaman \cite{Z}.  Let us consider $\mathscr{P} = \{ \mathfrak{p} \subset \mathcal{O}_K \ | \  \mathfrak{p} \text{ is unramified},
 \mathfrak{p} \text{ is of degree 1, and } 
\sigma_{\mathfrak{p}}=\mathcal{C}
  \}$ and
\begin{equation}
  \label{S}
  S = \sum_{\mathfrak{p} \in \mathscr{P}} \frac{\log N \mathfrak{p}  }{N \mathfrak{p} }
  f \Big(  \frac{\log N \mathfrak{p}  }{\Lo } \Big) ,
\end{equation}
where $f$ is a compactly supported function.  

We shall evaluate $S$ with the weights $f=f_{\ell,A,B}(t)$, described in the following lemma.   This family of weights was introduced by Zaman \cite[Lemma 2.6]{Z}, who generalised the weights used by Heath-Brown \cite{HB}.
\begin{lemma}
For $A,B >0$ and $\ell\in \Bbb{Z}^+$ satisfying $B>2A \ell$,
there exists a real variable function $f(t) = f_{\ell,A,B}(t)$ satisfying the following properties. 
\begin{itemize}
\item[(i)] $0 \le f(t) \le A^{-1}$ for all $t \in \mathbb{R}$. 
\item[(ii)] The support of $f$ is contained in $[B-2A \ell, B]$.
\item[(iii)]  The Laplace transform $F(z)=F_{\ell,A,B}(z)= \int_{\mathbb{R}}
f_{\ell,A,B}(t) e^{-zt}  dt$ is 
\begin{equation}
  F(z) = e^{-(B-2 A\ell)z} \Big(\frac{1-e^{-Az}  }{Az} \Big)^{2 \ell}. 
\end{equation}
\item[(iv)]  Let $\Lo \ge 1$ be arbitrary.  Suppose $s=\sigma+it \in \mathbb{C}$ satisfies
$\sigma < 1$ and $t \in \mathbb{R}$.  Write $\sigma=1-\frac{x}{\Lo}$ and $t=\frac{y}{\Lo}$.
If $0 \le \alpha \le 2 \ell$, then 
\begin{equation}
  \label{Fbd1}
  |F((1-s) \Lo)| \le e^{-(B-2 A\ell)x} \Big( \frac{2}{A \sqrt{x^2+y^2}  } \Big)^{\alpha}
  =e^{-(B-2 A\ell)(1-\sigma) \Lo} \Big( \frac{2}{A |1-s| \Lo  } \Big)^{\alpha}.
\end{equation}
Furthermore, 
\begin{equation}
  \label{Fbd2}
   |F((1-s) \Lo)| \le e^{-(B-2 A\ell)x} \text{ and } F(0)=1. 
\end{equation}
\end{itemize}
\end{lemma}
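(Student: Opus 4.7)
The plan is to construct $f_{\ell,A,B}$ explicitly as a translated multiple convolution of box functions, which is the standard Heath-Brown/Zaman device. Let $g(t) = A^{-1}\chi_{[0,A]}(t)$, form the $2\ell$-fold self-convolution $h = g * g * \cdots * g$, and set $f(t) = h(t-(B-2A\ell))$. Properties (i) and (ii) then follow from basic facts about convolutions of non-negative $L^1$ functions: $h \ge 0$ with $\supp(h) \subseteq [0,2A\ell]$, so after translation $\supp(f) \subseteq [B-2A\ell,B]$, which lies in $[0,\infty)$ by the hypothesis $B > 2A\ell$; and Young's inequality applied inductively, $\|g * \psi\|_\infty \le \|g\|_\infty \|\psi\|_1 = A^{-1}$, yields $\|f\|_\infty \le A^{-1}$.

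For (iii), the Laplace transform of $g$ is $A^{-1}\int_0^A e^{-zt}\,dt = (1-e^{-Az})/(Az)$. Since convolution in $t$ corresponds to multiplication of Laplace transforms, $h$ has Laplace transform $\bigl((1-e^{-Az})/(Az)\bigr)^{2\ell}$, and the translation by $B-2A\ell$ introduces the factor $e^{-(B-2A\ell)z}$, producing the claimed formula. The value $F(0)=1$ in (iv) is then just $\int f(t)\,dt = \int h(t)\,dt = 1$, since each $g$ has integral $1$ and convolution preserves total mass.

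For the quantitative bounds in (iv), write $z = (1-s)\Lo$, so $\Re(z) = x \ge 0$ and $|F((1-s)\Lo)| = e^{-(B-2A\ell)x} \cdot |(1-e^{-Az})/(Az)|^{2\ell}$. The key estimate is
\begin{equation*}
\left|\frac{1-e^{-Az}}{Az}\right| \le \min\!\left(1,\,\frac{2}{A|z|}\right) \quad (\Re(z) \ge 0),
\end{equation*}
where the bound by $1$ comes from the representation $A^{-1}\int_0^A e^{-zt}\,dt$ together with $|e^{-zt}| \le 1$ for $t \ge 0$, while the bound by $2/(A|z|)$ uses $|1-e^{-Az}| \le 1 + e^{-A\Re(z)} \le 2$. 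To pass from this to the interpolated bound \eqref{Fbd1}, I would split cases according to whether $A|z| \ge 2$ or $A|z| < 2$: in the first regime, $(2/(A|z|))^{2\ell} \le (2/(A|z|))^\alpha$ because the base is $\le 1$ and $\alpha \le 2\ell$; in the second, $1 \le (2/(A|z|))^\alpha$ because the base is $\ge 1$ and $\alpha \ge 0$. Either way $|(1-e^{-Az})/(Az)|^{2\ell} \le (2/(A|z|))^\alpha$, which is \eqref{Fbd1}; the special case $\alpha = 0$ gives the first half of \eqref{Fbd2}. The only (mild) obstacle is keeping the two regimes straight in the interpolation; beyond that the proof is routine convolution bookkeeping.
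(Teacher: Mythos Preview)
Your construction and verification are correct. The paper itself does not supply a proof of this lemma; it simply quotes the statement and attributes it to Zaman \cite[Lemma 2.6]{Z} (who in turn generalised Heath-Brown's weights). Your explicit $2\ell$-fold convolution of the box $g=A^{-1}\chi_{[0,A]}$ followed by a translation is exactly the standard device behind that citation, and every step you outline---Young's inequality for $\|f\|_\infty\le A^{-1}$, support addition under convolution, multiplicativity of the Laplace transform, and the $\min(1,2/(A|z|))$ bound together with the monotonicity-in-exponent interpolation---is valid as written. There is nothing to compare against in the paper beyond the reference.
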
 
In our argument, we shall show that \eqref{S} is positive for functions
 $f=f_{\ell,A,B}(t)$ for various choices of $\ell, A$, and $B$. 
In such cases, it follows that 
there is a prime $\mathfrak{p} \in \mathscr{P}$ so that $N \mathfrak{p}  \le d_{L}^{B}$.
We now summarise the arguments of \cite{LMO} and \cite{Z} that relate $S$ to the low lying zeros of $\zeta_L(s)$; we first consider the following function
\[
 \tilde{\Psi}_{\mathcal{C}}(s) =  
 \sum_{ 
  \mathfrak{p}\subseteq \mathcal{O}_K  } 
  \sum_{m\ge 1}
 \sum_{\sigma_{\mathfrak{p}}^m = \mathcal{C}} 
 \frac{\log N \mathfrak{p} }{\left(N\mathfrak{p}^m\right)^s} \text{ for $\Re (s)>1$}. 
\]
 Let $g_{\mathcal{C}}$ be a representative of $\mathcal{C}$ and $E$ be its fixed field.  Then Deuring's reduction \cite{De} ensures 
 that $ \tilde{\Psi}_{\mathcal{C}}(s)$  equals $\Psi_{\mathcal{C}}(s)$ plus an  error arising from the ramified primes where
\[
 \Psi_{\mathcal{C}}(s) = -\frac{|\mathcal{C}|}{|{\mathcal{G}|}} \sum_{\chi} \overline{\chi}(g_{\mathcal{C}}) \frac{L'}{L} (s,\chi,L/E),
\]
and the sum is over the irreducible characters of $\Gal(L/E)$. By \cite[Lemma 4.1]{Z}, 
$$
\Lo^{-1} S 
= \frac1{2\pi i} \int_{2-i\infty}^{2+i\infty} \Psi_{\mathcal{C}}(s) F((1-s)\Lo) ds +   \mathcal{E}_{1} ,
$$
where 
$\mathcal{E}_{1}$ arises from the contribution of the ramified prime ideals, the prime ideals $\mathfrak{p}$ with $N\mathfrak{p}$ non-rational prime, and the powers of prime ideals:
\begin{equation}\label{def-E2}
 \mathcal{E}_{1} \ll  A^{-1} \Lo^2 e^{-(B-2A\ell) \Lo/2}.
\end{equation}
Now as Artin reciprocity yields that $L (s,\chi,L/E)$ is a Hecke $L$-function of $E$, the classical analytic machinery can be invoked. 
We have from \cite[Lemma 4.2]{Z} that
\[
   \Big| \frac{|\mathcal{G}|}{|\mathcal{C}|} \Lo^{-1} S - F(0) \Big| \le 
   \sum_{\substack{\rho \\ |\gamma| \le T^{*}}} |F((1-\rho) \Lo)| 
   + \mathcal{E}_{1}    + \mathcal{E}_{2}   + \mathcal{E}_3   + \mathcal{E}_4,   
   \]
where 
$\mathcal{E}_{2}$ arises from the zeros with $|\gamma| > T^{*}$, 
$\mathcal{E}_3$ is from the zero at $s=0$, 
and $\mathcal{E}_4$ comes from the integral along the line $\Re (s) =-1/2$:
\begin{align}
& \label{def-E1} 
 \mathcal{E}_{2} \ll \Lo   \Big( \frac{2}{A T^{*} \Lo} \Big)^{2 \ell},
\\
& \label{def-E3} 
 \mathcal{E}_3 \ll  \Lo  \Big( \frac{1}{A \Lo } \Big)^{2 \ell} e^{-(B-2 A\ell) \Lo},
\\
& \label{def-E4} 
 \mathcal{E}_4 \ll  \Lo  \Big( \frac{2}{A \Lo } \Big)^{2 \ell} e^{-3(B-2 A\ell) \Lo/2}.
\end{align}
Finally, \cite[Lemma 4.3]{Z} removes the zeros outside of various zero-free regions. 
Let $J \ge 1$ be given and $T^{*} \ge 1$ be fixed.   Suppose 
\[
  1 \le R_1 \le R_2 \le \cdots \le R_J \le \Lo, \ 
  0 = T_0 < T_1 \le T_2 \le \cdots \le T_J= T^{*}. 
\]
We define $ \sum'$ as the sum over the zeros $\rho=\beta+i \gamma$
of $\zeta_L(s)$ satisfying 
\[
   \beta > 1-\frac{R_j}{\Lo}, \ T_{j-1} \le |\gamma| < T_j \text{ for some } 1 \le j \le J.
\]
Then
\begin{equation}
    \sum_{\substack{\rho \\ |\gamma| \le T^{*}}} |F((1-\rho) \Lo)|
    = 
    \sideset{}{'}\sum_{\rho}
    |F((1-\rho) \Lo)| + \mathcal{E}_5 + \mathcal{E}_6,
\end{equation}    
where 
$\mathcal{E}_5$ arises from the zeros in the region $\beta \le 1-\frac{R_1}{\Lo}, |\gamma| < T_1$, and 
$\mathcal{E}_6$ is from the zeros in the union over $j=2,\ldots,J$ of the regions $\beta \le 1-\frac{R_j}{\Lo}, T_{j-1} \le |\gamma| < T_j$:
\begin{align}
& \label{def-E5}
\mathcal{E}_5 \ll \min\Big(  \Big( \frac{2}{A} \Big)^{2 \ell},  \Lo \Big)e^{-(B-2 A\ell) R_1},
\\
& \label{def-E6}
\mathcal{E}_6 \ll \sum_{j=2}^{J} \Lo \Big( \frac{2}{A T_{j-1} \Lo} \Big)^{2 \ell} e^{-(B-2 A\ell)R_j}.
\end{align}
Note that if $J=1$, then $\mathcal{E}_6$ vanishes. 
Recognizing $F(0)=1$, we summarise all of the above in the following lemma:
\begin{lemma} \label{lemmaTj}
Let $J \ge 1$ be given and $T^{*} \ge 1$ be fixed.   Suppose 
\[
  1 \le R_1 \le R_2 \le \cdots \le R_J \le \Lo, \ 
  0 = T_0 < T_1 \le T_2 \le \cdots \le T_J= T^{*}. 
\]
Then
\begin{equation}
  \frac{|\mathcal{G}|}{|\mathcal{C}|} \Lo^{-1} S\ge 1 - 
 \sideset{}{'}\sum_{\rho} |F((1-\rho) \Lo)| 
     + \mathcal{O}\left(  \mathcal{E}_{1}    + \mathcal{E}_{2}   + \mathcal{E}_3   + \mathcal{E}_4   + \mathcal{E}_5 + \mathcal{E}_6\right),
\end{equation}    
where the $\mathcal{E}_i$'s are defined in \eqref{def-E2}, \eqref{def-E1}, \eqref{def-E3}, \eqref{def-E4}, \eqref{def-E5}, and \eqref{def-E6}. 
\end{lemma}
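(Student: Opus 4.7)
The plan is to assemble the lemma directly from the three results of Zaman that have already been stated in the excerpt. No new analytic input is required; the statement is essentially a summary, and its proof is a sequence of invocations of \cite[Lemma 4.1, 4.2, 4.3]{Z}, together with the triangle inequality and the fact that $F(0) = 1$.

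First, I would recall the integral representation
\[
\Lo^{-1} S = \frac{1}{2\pi i}\int_{2-i\infty}^{2+i\infty} \Psi_{\mathcal{C}}(s)\, F((1-s)\Lo)\, ds + \mathcal{E}_1,
\]
which is \cite[Lemma 4.1]{Z}, obtained by Mellin inverting the Laplace transform $F$, expanding the Dirichlet series $\tilde{\Psi}_{\mathcal{C}}(s)$ attached to $\mathscr{P}$, and applying Deuring's reduction to pass to $\Psi_{\mathcal{C}}$. The error $\mathcal{E}_1$ collects the contributions of ramified primes, prime powers, and primes of degree greater than one, and satisfies \eqref{def-E2}.

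Second, since by Artin reciprocity $\Psi_{\mathcal{C}}(s)$ is a linear combination of logarithmic derivatives of Hecke $L$-functions of $E$, one can shift the contour from $\Re(s) = 2$ to $\Re(s) = -1/2$, picking up a simple pole at $s=1$ of residue $(|\mathcal{C}|/|\mathcal{G}|) F(0)$, simple poles at each non-trivial zero $\rho$ contributing $-(|\mathcal{C}|/|\mathcal{G}|) F((1-\rho)\Lo)$, and a contribution at $s=0$ of controlled size. Truncating the zero sum at height $T^*$ via the bound \eqref{Fbd1} with $\alpha = 2\ell$, this yields
\[
\Bigl|\tfrac{|\mathcal{G}|}{|\mathcal{C}|}\Lo^{-1}S - F(0)\Bigr| \le \sum_{\substack{\rho\\|\gamma|\le T^*}} |F((1-\rho)\Lo)| + \mathcal{E}_1 + \mathcal{E}_2 + \mathcal{E}_3 + \mathcal{E}_4,
\]
which is precisely \cite[Lemma 4.2]{Z}; here $\mathcal{E}_2$ bounds zeros beyond $T^*$, $\mathcal{E}_3$ the trivial zero at $s=0$, and $\mathcal{E}_4$ the residual integral along $\Re(s) = -1/2$.

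Third, I would apply \cite[Lemma 4.3]{Z} to split the truncated sum over zeros according to the prescribed regions $(R_j, T_j)$, obtaining
\[
\sum_{\substack{\rho\\|\gamma|\le T^*}} |F((1-\rho)\Lo)| = \sideset{}{'}\sum_\rho |F((1-\rho)\Lo)| + O(\mathcal{E}_5 + \mathcal{E}_6),
\]
where $\mathcal{E}_5$ and $\mathcal{E}_6$ estimate the contribution of zeros with $\beta \le 1 - R_j/\Lo$ in the lowest strip and the higher strips respectively. These estimates follow from \eqref{Fbd1} with $\alpha = 2\ell$ in the higher strips and from the minimum of \eqref{Fbd1} at $\alpha=2\ell$ and \eqref{Fbd2} for the lowest strip (so as to avoid dividing by $|1-s|\Lo$ when $\gamma$ may be small). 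Combining this with the second step via the triangle inequality, and using $F(0) = 1$ from \eqref{Fbd2}, delivers the claimed lower bound.

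The main obstacle, if the proof were being carried out in full, would lie in controlling the contour shift to $\Re(s) = -1/2$ uniformly and in bounding each $\mathcal{E}_i$ with explicit constants; but since the three input lemmas of Zaman supply these estimates as black boxes, the present proof is reduced to a clean bookkeeping step that threads them together and extracts the lower bound on $(|\mathcal{G}|/|\mathcal{C}|)\Lo^{-1} S$.
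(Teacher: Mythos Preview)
Your proposal is correct and matches the paper's approach exactly: the paper does not give a separate proof of this lemma but presents it explicitly as a summary of the preceding discussion, which invokes \cite[Lemmas 4.1, 4.2, 4.3]{Z} in sequence and concludes by noting $F(0)=1$. Your write-up is in fact slightly more detailed than the paper's, since you spell out how the contour shift and the bounds \eqref{Fbd1}, \eqref{Fbd2} produce each $\mathcal{E}_i$, whereas the paper simply quotes Zaman's lemmas as black boxes.
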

This is the key formula allowing us to show that $S$ is positive in various cases.  There are four cases to consider. 
Let $\tilde{\eta}\in (0,\frac{1}{12.74})$
be a sufficiently small  absolute positive  constant.
\begin{enumerate}
\item[1.] {\bf The non-exceptional case:} \\
Assume all zeros $\beta+i\gamma$ satisfy $\beta=1-\frac{\lambda}{\Lo}$
with 
$
 \lambda\ge \tfrac1{12.74} .
$
\item[2.] {\bf The exceptional cases:}\\
We assume there is an exceptional zero $\beta_1 =1-\frac{\lambda_1}{\Lo}$ with 
$ \lambda_1 < \frac1{ 12.74} .$
\begin{enumerate}
\item[(i)] {\it $\lambda_1$ {\it small}:}
$\displaystyle{ \tilde{\eta}\le \lambda_1 < \tfrac1{12.74} }$ (with $\tilde{\eta}$ fixed parameter $0<\tilde{\eta}<1$).
\item[(ii)] {\it $\lambda_1$ {\it very small}:}
$\displaystyle{  \Lo^{-200} \le \lambda_1 < \tilde{\eta} }$.
\item[(iii)] {\it $\lambda_1$ {\it extremely small}}:
$\displaystyle{  \lambda_1 < \Lo^{-200} }$.
\end{enumerate}
\end{enumerate}
We shall not consider Cases 1 and 2.(i); for the proofs we refer to  \cite[pp.136-138]{Z}.  
Note that the proofs in these cases make use of a number of results from \cite{K} and \cite{KN}.
Namely, the zero-free region for $\zeta_{L}(s)$ \eqref{zfr} 
\cite[Theorem 1.1]{K},  the Deuring-Heilbronn phenomenon for zeros close to $s=1$
\cite[Theorem 4]{KN},  and a smoothed explicit local formula for zeros of the Dedekind zeta function
\cite[Lemma 7]{KN}. 
Instead, we shall improve Cases 2.(ii) and (iii).
In the following table, in column 2,  we record  Zaman's  values of $B$ for which it can be shown that
$S$ is positive for $f=f_{\ell,A,B}(t)$ (for some $\ell$ and $A$),  in each case.   Column 3 lists our improvements to Zaman's values.
 \begin{center}
  \begin{tabular}{|l|c|c|  }
  \hline
  Cases & $B$ (Zaman)  &   $B$  \\
 \hline
  \hline
  1. The non-exceptional case       & $7.41$ &  $-$  \\
 \hline
 2.(i) $\lambda_1$  {\it small}           & $2.63$ & $-$   \\
 \hline
 2.(ii)  $\lambda_1$ {\it very small}      & $36.5$ &  12.48  \\
 \hline
 2.(iii)  $\lambda_1$ {\it extremely small}  & $39.5$ &  15.72  \\
 \hline
 \end{tabular}
 \end{center}
 This table shows that, in each case, there exists a prime ideal $\mathfrak{p} \in  \mathscr{P} $ such that
 $
 N \mathfrak{p} \le d_{L}^{B} 
 $ where $B$ is the listed constant(s).  It follows that, we can unconditionally choose $B=15.72$ and this establishes 
 Theorem \ref{mainthm}.  Note that we do not attempt to improve cases 1 and 2.(i) as we currently are unable to reduce the constants
 in Cases 2.(ii) and (iii) below 7.41.  We now treat the last two cases.
\subsection{Very small case:  $\Lo^{-200} \le \lambda_1 < \tilde{\eta}$} We shall select the weight function 
$f=f_{\ell,A,B}$ with parameters $\ell=101$ and $A$ and $B$ to be determined. 
Note that for $\eta=0.1$, Theorem \ref{DH_T} holds with $C=12.262$ (see \eqref{lowzfr}). 
For now we assume that $B-2 A\ell > C= 12.262$. 
%
The selection of these choices shall be explained shortly. 
By \eqref{def-E2}, \eqref{def-E1}, \eqref{def-E3}, and \eqref{def-E4}, 
\begin{align*}
& \mathcal{E}_{1} \ll  A^{-1} \Lo^2 e^{-(B-2A\ell) \Lo/2}  \ll \Lo^{-2\ell+1},\enspace 
 \mathcal{E}_{2} \ll \Lo   \Big( \frac{2}{A T^{*} \Lo} \Big)^{2 \ell} \ll \Lo^{-2\ell+1}, 
\\&  \mathcal{E}_3 \ll  \Lo  \Big( \frac{1}{A \Lo } \Big)^{2 \ell} e^{-(B-2 A\ell) \Lo}  \ll \Lo^{-2\ell+1}, 
\\&   \mathcal{E}_4 \ll  \Lo  \Big( \frac{2}{A \Lo } \Big)^{2 \ell} e^{-3(B-2 A\ell) \Lo/2}  \ll \Lo^{-2\ell+1}.
\end{align*}
We apply Lemma \ref{lemmaTj} with $J=1, T^{*}=1, $ and $R_1 = \frac{\log ( \kappa \lambda_1^{-1} )}{C}$, for some $\kappa >0$,  to conclude that 
there is no error term $\mathcal{E}_6$, and that
\[
\mathcal{E}_5 \ll \min\Big(  \Big( \frac{2}{A} \Big)^{2 \ell},  \Lo \Big)e^{-(B-2 A\ell) R_1} 
\ll  e^{-(B-2 A\ell) R_1} 
\ll \lambda_1^{\frac{B-2A\ell }{C}}.
\]
Note that $\sum'$  is over the zeros $\rho=\beta +i\gamma$ with $\beta > 1-\frac{R_1}{ \Lo}$ and $|\gamma|\le 1$, 
and that for any fixed $\eta\in(0,1)$, as $ \Lo^{-2 \ell+2} \le \lambda_1$, we have
$R_1 \le  \frac{1}{12.26}(\log \kappa +(2 \ell-2)\log \Lo) = o(\Lo)$.  
Thus, for $\Lo$ sufficiently large, 
$1-\eta< 1- \frac{R_1 }{\Lo}$.
From this, Theorem \ref{DH_T} ensures that any other zero $\rho=\beta+i\gamma\neq\beta_1$, with $|\gamma|\le 1$, has to satisfy $\beta \le  1-\frac{R_1 }{\Lo}$, and hence $\sum'$ only contains the exceptional zero $\beta_1$. Thus,
\[
    \frac{|\mathcal{G}|}{|\mathcal{C}|} \Lo^{-1} S \ge 1-|F((1-\beta_1)\Lo)| 
+ \mathcal{O}\left(  \Lo^{-2\ell+1}\right)  + \mathcal{O}\left( \lambda_1^{\frac{B-2A\ell }{C}}\right). 
\]
Note that by \eqref{Fbd2}, we have $|F((1-\beta_1)\Lo)| \le  e^{-(B-2A\ell) \lambda_1}$.
Thus,
\[
   \frac{|\mathcal{G}|}{|\mathcal{C}|}\Lo^{-1} S 
   \ge 1 - e^{-(B-2A\ell) \lambda_1}
+ \mathcal{O}\left(  \Lo^{-2\ell+1}\right)  + \mathcal{O}\left( \lambda_1^{\frac{B-2A\ell }{C}}\right).
\]
Using 
$1-e^{-x} > x -\frac{x^2}2 \ge \frac{3x}{4}$ for $0 < x \le \frac{1}{2}$, the above becomes positive whenever
\[
 \tfrac{3}{4}(B-2A\ell) \lambda_1
+ \mathcal{O}\left(  \Lo^{-2\ell+1}\right)  + \mathcal{O}\left( \lambda_1^{\frac{B-2A\ell }{C}}\right) >0,
\]
which happens if
\[ 
(B-2A\ell) \lambda_1 \gg   \Lo^{-2\ell+1}
\ \text{and}\ 
(B-2A\ell) \lambda_1 \gg \lambda_1^{\frac{B-2A\ell }{C}}. 
\]
The first condition is true under our assumption $\Lo^{-200}= \Lo^{-2\ell+2 } \le \lambda_1$.
Since $\lambda_1<1$, the second condition is true if 
 $\frac{B-2A\ell }{C}>1$.  Thus, we must choose $B$ as small as possible such that $B> C+2A\ell=12.262+2 A \ell$.
 By a computer calculation, we obtain 
 \begin{equation}
  \label{parameters1}
   \ell = 101 , \ B = 12.48 , \ A= \frac{1}{970}, \ \text{ and } B-2 A\ell = 12.271\ldots  .
\end{equation}
 \subsection{Extremely small case: $\lambda_1 < \Lo^{-200}$}  
 We shall assume that there are positive increasing parameters $T_j$ for $0 \le j \le J$
 for some $J \ge 1$.
 Associated to each $j$ are positive constants $C_j$ and $\kappa_j$ such that 
 if $\beta'+i \gamma'$ is another zero of $\zeta_L(s)$ with $\beta' \ge \frac{1}{2}$ and $|\gamma'| \le T_j$, then 
 \begin{equation}\label{bdd_beta'2}
       \beta' \le  1 - \frac{ \log \Big( \frac{\kappa_j}{(1-\beta_1) \Lo} \Big)}{C_j \Lo}.
 \end{equation}
The parameters $C_j$ shall increase with $j$ and we set $T_0=1$, $C_0 =14.144$. 
Note that 
\begin{equation}
  \label{Lobd}
\Lo < \lambda_1^{-1/{200}},
\end{equation}
and from Corollary \ref{upperbd}, 
$1-\beta_1 \gg d_{L}^{-\frac{C_0}2}$
where $C_0 =14.144$,
which implies that 
\begin{equation}
\label{rng-lambda1}
\Lo e^{-\frac{C_0}2\Lo} \ll \lambda_1 < \Lo^{ -200}.
\end{equation}
We define $0<u<1<v$ to be positive parameters. 
We choose $\ell = \ceil{ v \Lo } \in [v\Lo, v\Lo+1), A = \frac{u}{\Lo}$, so that $A\Lo = u \ll 1$ and $ uv\le A\ell < 
uv+ O(\Lo^{-1})=uv+o(1)$. Throughout this section, we write   $o(1)$ to denote a term which approaches $0$ as $d_L \to \infty$. 
By \eqref{def-E2}, \eqref{def-E1}, \eqref{def-E3}, and \eqref{def-E4}, we find 
\begin{align*}
& \mathcal{E}_{1} \ll  A^{-1} \Lo^2 e^{-(B-2A\ell) \Lo/2}  \ll  \Lo^3 e^{-\frac{B-2uv+o(1)}{2} \Lo}  ,
 \\ & \mathcal{E}_{2} \ll \Lo   \Big( \frac{2}{A T^{*} \Lo} \Big)^{2 \ell} \ll \Lo   e^{-  2 v \log(\frac{u T^{*}}{2}) \Lo }, 
\\&  \mathcal{E}_3 \ll  \Lo \Big( \frac{1}{A \Lo } \Big)^{2 \ell} e^{-(B-2 A\ell) \Lo}  \ll \Lo    e^{-(B-2  uv-2 v \log  (1/u)+o(1)) \Lo},
\\&   \mathcal{E}_4 \ll  \Lo  \Big( \frac{2}{A \Lo } \Big)^{2 \ell} e^{-3(B-2 A\ell) \Lo/2}  \ll \Lo    e^{-(\frac{3B}{2}-3  uv-2 v \log  (2/u)+o(1)) \Lo} , 
\end{align*}
and thus
\begin{equation}\label{bnd-E1234}
\mathcal{E}_{1}+\mathcal{E}_{2}+\mathcal{E}_3+\mathcal{E}_4 
\ll \Lo^3 e^{-\frac{B-2uv+o(1)}{2} \Lo}  + \Lo   e^{-  2 v \log(\frac{u T^{*}}{2}) \Lo }. 
\end{equation}
Using \eqref{Fbd2} and \eqref{rng-lambda1}, and assuming $B-2uv>0$, we have
\begin{equation}
1-|F((1-\beta_1)\Lo)| > \tfrac{3}{4}(B-2uv+o(1)) \lambda_1 \gg (B-2uv+o(1))\Lo e^{-\frac{C_0}2\Lo}  .
\end{equation}
We assume 
$\frac{B-2uv}{2}-\frac{C_0}2>0$ 
and 
$ \frac{C_0}2 -  2 v \log(\frac{u T^{*}}{2})<0$, so that
\[
\Lo^3 e^{-\frac{B-2uv+o(1)}{2} \Lo}  = \Lo e^{-\frac{C_0}2\Lo} e^{- (\frac{B-2uv+o(1)}{2}-\frac{C_0}2-\frac{2\log\Lo}{\Lo}) \Lo } =o \Big(\Lo e^{-\frac{C_0}2\Lo} \Big),\text{ and}
\]
\[
\Lo   e^{-  2 v \log(\frac{u T^{*}}{2}) \Lo } = \Lo   e^{-\frac{C_0}2\Lo} e^{(\frac{C_0}2 -  2 v \log(\frac{u T^{*}}{2}) )\Lo }  =o \Big(\Lo e^{-\frac{C_0}2\Lo}\Big) .
\]
By choosing $B$ and $T^{*}$ with $B> C_0 + 2uv$ and $ T^{*}  > \frac{2}{u}e^{ \frac{C_0}{4v} }$, we then have 
\[ \mathcal{E}_{1}+\mathcal{E}_{2}+\mathcal{E}_3+\mathcal{E}_4 = o(1-|F((1-\beta_1)\Lo)| ) .\]
We now apply Lemma \ref{lemmaTj} with $J \ge 1$, $T^{*}=T_{J} >   \frac{2}{u}e^{ \frac{C_0}{4v} }$, and $R_j = \frac{\log ( \kappa_{j}\lambda_1^{-1} )}{C_j}$ for each $j\in\{1,\ldots,J\}$. Now by
\eqref{bdd_beta'2}, for any non-trivial zero $\rho=\beta+i\gamma\neq\beta_1$ with $|\gamma|\le T_j$, either $\beta\le 1-\frac{1}{2}$ or $\beta\le 1-\frac{R_j }{\Lo}$. From the symmetry of the non-trivial zeros of $\zeta_L(s)$ with respect to $\Re(s)=\frac{1}{2}$, it follows that  
$\beta \le 1-\frac{R_j }{\Lo}$, and thus the sum $\sum'$ contains at most $\beta_1$ and $1-\beta_1$, where the latter only contributes a negligible error.
%
By our choice $R_1 = \frac{\log( \kappa_{1}\lambda_1^{-1})}{C_1}$ and \eqref{Lobd}, we have
\[
\mathcal{E}_5 
\ll  \Lo  e^{-(B-2 A\ell) R_1} 
\ll   \Lo  \left(\kappa_1^{-1}\lambda_1\right)^{ \frac{(B-2 uv) }{C_1} } \ll   \lambda_1^{ \frac{(B-2 uv) }{C_1} -\frac1{200}} = o(\lambda_1)
\]
if we assume $B> \left(1+\frac1{200}\right)C_1+2uv$. Finally, we analyse \eqref{def-E6}
\[
\mathcal{E}_6
 \ll \sum_{j=2}^{J} \Lo \Big( \frac{2}{A T_{j-1} \Lo} \Big)^{2 \ell} e^{-(B-2 A\ell)R_j}
 \ll \Lo \sum_{j=2}^{J} e^{-2 v\Lo\log \big( \frac{uT_{j-1}}{2} \big) }  \lambda_1^{ \frac{ B-2 uv  }{C_j} }  .
\]
Note that choosing 
$
    \frac{B-2uv}{C_j} \ge 1 \text{ for }  2 \le j \le J 
$
would immediately give $\mathcal{E}_6 =o(\lambda_1)$
as long as $T_1 > \frac{2}{u}$. 
However, this would force us to take $B\ge C_J+2uv$.
As we are trying to minimize $B$, we instead  assume  
$
    \frac{B-2uv}{C_j} < 1 \text{ for }  2 \le j \le J.
$
This implies $2uv<B < C_2+2uv$; we also have
\begin{align*}
\mathcal{E}_6
 & \ll \lambda_1
  \Lo \sum_{j=2}^{J} e^{-2 v\Lo\log \big( \frac{uT_{j-1}}{2} \big) }  \Big(\Lo e^{-\frac{C_0}2 \Lo}\Big)^{ \frac{ B-2 uv  }{C_j} -1}  
  \ \text{ (since $\lambda_1 \gg \Lo e^{-\frac{C_0}2 \Lo}$)}
\\& \ll \lambda_1
\sum_{j=2}^{J} 
\Lo^{ \frac{ B-2 uv  }{C_j} }
e^{-\left( 2 v \log \big( \frac{uT_{j-1}}{2} \big) - \frac{C_0}2 \left(1-\frac{ B-2 uv  }{C_j} \right) \right)\Lo}.
\end{align*}
It follows that $E_6 = o(\lambda_1)$ if $B   > \big( 1-\tfrac{4 v}{C_0} \log \big( \tfrac{uT_{j-1}}{2} \big) \big) C_j+2 uv$
for all  $2 \le j \le J$. 
To conclude, the desired inequality $ \frac{|\mathcal{G}|}{|\mathcal{C}|}\Lo^{-1} S >0 $ follows from the assumptions
\begin{equation}\label{firstassumptions}
 T^{*} =T_{J} > \frac{2}{u}e^{ \frac{C_0}{4v} },  \  (1+\tfrac1{200} )C_1   + 2uv
< B, \text{ and }
\end{equation}
\begin{equation}\label{conditionsBT}
\max_{2 \le j \le J} \big(  \big( 1-\tfrac{4 v}{C_0} \log \big( \tfrac{uT_{j-1}}{2} \big) \big) C_j\big) + 2uv
< B < C_2+2uv.
\end{equation}
(Note that the condition $C_0 +2uv < B$ is dropped since $C_0 < C_1$.)
We employ a computer search to determine admissible parameters $T_j$, $C_j$, $B$, $u$, and $v$.
We now apply Theorem \ref{DH_T} to obtain $C_j=C(c,\tfrac{1}{2},T_j,\varepsilon)$ for numerically optimal choices of $c$ and $\varepsilon$.  We use the parameters
\begin{equation}
  T_1 = 4.6, \ 
  C_1 = 14.58 \ldots, \ 
  T_2 = 10, \ 
  C_2 = 15.50 \ldots, \
  T_3 = 130, \ , C_3 = 20.21 \ldots. 
\end{equation}
(see \eqref{highzfr} just after Theorem \ref{DH_T}) and 
\begin{equation}
  B =15.72, \
  u = 0.53, \ v= 1.001,  \ A = u \Lo^{-1}, \  \ell = \ceil{ v \Lo }, \  J = 3.
\end{equation}
These satisfy the conditions \eqref{firstassumptions} and \eqref{conditionsBT} and thus complete our proof. 

\end{document}